\begin{document}

\title[Correlations for pairs of billiard trajectories]
{Correlations for pairs of periodic trajectories\\ for open billiards}

\author[V. Petkov]{Vesselin Petkov}
\address{Universit\'e Bordeaux I, Institut de Math\'ematiques de Bordeaux, 351, 
Cours de la Lib\'eration, 33405  Talence, France}
\email{petkov@math.u-bordeaux1.fr}
\author[L. Stoyanov]{Luchezar Stoyanov}
\address{University of Western Australia, School of Mathematics and
Statistics,  Perth, WA 6009,  Australia}
\email{stoyanov@maths.uwa.edu.au}
\thanks{The first author was partially supported by the ANR project NONAA}

\maketitle

\def\nexto{\kern -0.54em}
\newcommand{\C}{\protect\mathbb{C}}
\newcommand{\R}{\protect\mathbb{R}}
\newcommand{\Q}{\protect\mathbb{Q}}
\newcommand{\Z}{\protect\mathbb{Z}}
\newcommand{\N}{\protect\mathbb{N}}
\newtheorem{thm}{Theorem}
\newtheorem{prop}{Proposition}
\newtheorem{lem}{Lemma}
\newtheorem{deff}{Definition}
\def\DF{{\rm {I\ \nexto F}}}
\def\DR{{\rm {I\ \nexto R}}}
\def\DN{{\rm {I\ \nexto N}}}
\def\DZ{{\rm {Z \kern -0.45em Z}}}
\def\DC{{\rm\hbox{C \kern-0.83em\raise0.08ex\hbox{\vrule height5.8pt width0.5pt} \kern0.13em}}}
\def\DQ{{\rm\hbox{Q \kern-0.92em\raise0.1ex\hbox{\vrule height5.8pt width0.5pt}\kern0.17em}}}
\def\T{{\mathbb T}}
\def\S{{\mathbb S}}
\def\sn{{\S}^{n-1}}
\def\sN{{\S}^{N-1}}

\def\ep{\epsilon}
\def\e{\emptyset}
\def\di{\displaystyle}
\def\sk{\smallskip}
\def\bs{\bigskip}
\def\ms{\medskip}
\def\dk{\partial K}
\def\kamin{\kappa_{\min}}
\def\kamax{\kappa_{\max}}

\def\saa{\Sigma_A^+}
\def\sa{\Sigma_A}
\def\scc{\Sigma^+_C}
\def\sbb{\Sigma_B^+}
\def\san{\Sigma^-_A}

\def\Prf{\mbox{\footnotesize\rm Pr}}
\def\Pr{\mbox{\rm Pr}}

\def\be{\begin{equation}}
\def\ee{\end{equation}}
\def\beqn{\begin{eqnarray}}
\def\eeqn{\end{eqnarray}}
\def\beqn*{\begin{eqnarray*}}
\def\eeqn*{\end{eqnarray*}}
\def\endofproof{{\rule{6pt}{6pt}}}

\def\i{{\bf i}}
\def\iii{{\bf \i}}
\def\iu{\underline{i}}
\def\ju{\underline{j}}
\def\ku{\underline{k}}
\def\pu{\underline{p}}
\def\xx{{\bf x}}
\def\yy{{\bf y}}
\def\zz{{\bf z}}

\def\dist{\mbox{\rm dist}}
\def\diam{\mbox{\rm diam}}
\def\pr{\mbox{\rm pr}}
\def\supp{\mbox{\rm supp}}
\def\Arg{\mbox{\rm Arg}}
\def\In{\mbox{\rm Int}}
\def\Im{\mbox{\rm Im}}
\def\Int{\mbox{\rm Int}}
\def\span{\mbox{\rm span}}
\def\con{\mbox{\rm const}}
\def\Con{\mbox{\rm Const}}
\def\spec{\mbox{\rm spec}\,}
\def\Re{\mbox{\rm Re}}
\def\ecc{\mbox{\rm ecc}}
\def\var{\mbox{\rm var}}
\def\conf{\mbox{\footnotesize\rm const}}
\def\Conf{\mbox{\footnotesize\rm Const}}
\def\mt{\Lambda}
\def\la{\langle}
\def\ra{\rangle}
\def\Lip{\mbox{\rm Lip}}

\def\ff{{\mathcal F}}
\def\kk{{\mathcal K}}
\def\cc{{\mathcal C}}
\def\mm{{\mathcal M}}
\def\nn{{\mathcal N}}
\def\ll{{\mathcal L}}
\def\uu{{\mathcal U}}
\def\ss{{\mathcal S}}
\def\oo{{\mathcal O}}
\def\rr{{\mathcal R}}
\def\pp{{\mathcal P}}
\def\gg{{\mathcal G}}
\def\vv{{\mathcal V}}
\def\aa{{\mathcal A}}
\def\ii{{\imath }}
\def\jj{{\jmath }}
\def\H{{\mathcal H}}
\def\tf{\tilde{f}}

\def\hy{\hat{y}}
\def\hla{\hat{\lambda}}
\def\hx{\hat{x}}
\def\hxi{\hat{\xi}}
\def\hu{\hat{u}}
\def\hf{\hat{f}}
\def\hg{\hat{g}}
\def\hd{\hat{d}}
\def\ho{\hat{\omega}}
\def\hD{\hat{\Delta}}
\def\hxi{\hat{\xi}}
\def\heta{\hat{\eta}}
\def\hQ{\hat{Q}}
\def\htau{\hat{\tau}}
\def\hpi{\hat{\pi}}
\def\hL{\widehat{L}}
\def\he{\hat{e}}
\def\hchi{\hat{\chi}}
\def\hju{\hat{\ju}}
\def\hh{\hat{h}}
\def\nv{\nabla \varphi}
\def\Od{\mbox{\rm Od}}

\def\Re{\R\mbox{\rm e}}
\def\Ref{\R\mbox{\footnotesize\rm e}}
\def\h{h_{\mbox{\rm\footnotesize top}}}

\def\tc{\tilde{c}}
\def\tx{\tilde{x}}
\def\ty{\tilde{y}}
\def\tz{\tilde{z}}
\def\tu{\tilde{u}}
\def\tv{\tilde{v}}
\def\ta{\tilde{a}}
\def\td{\tilde{d}}
\def\tf{\tilde{f}}
\def\tg{\tilde{g}}
\def\tl{\tilde{\ell}}
\def\tr{\tilde{r}}
\def\tt{\tilde{t}}
\def\ti{\tilde{i}}
\def\tp{\tilde{p}}
\def\tw{\tilde{w}}
\def\tth{\tilde{\theta}}
\def\tka{\tilde{\kappa}}
\def\tla{\tilde{\lambda}}
\def\tmu{\tilde{\mu}}
\def\tga{\tilde{\gamma}}  
\def\trho{\tilde{\rho}}
\def\tvar{\tilde{\varphi}}
\def\tF{\tilde{F}}
\def\tU{\tilde{U}}
\def\tV{\tilde{V}}
\def\tS{\tilde{S}}
\def\tC{\tilde{C}}
\def\tP{\widetilde{P}}
\def\tQ{\widetilde{Q}}
\def\tR{\widetilde{R}}
\def\ts{\tilde{\sigma}}
\def\tpi{\tilde{\pi}}
\def\tla{\tilde{\lambda}}
\def\tf{\tilde{f}}

\def\v{{\sf v}}
\def\piU{\pi^{(U)}}
\def\mtb{ \mt_{\dk}}
\def\sAA{\Sigma_{\aa}^+}
\def\sA{\Sigma_{\aa}}
\def\sAn{\Sigma_{\aa}^-}
\def\jj{{\bf j}}

\def\hz{\hat{z}}
\def\ka{\kappa}
\def\wuloc{W^u_{loc}}
\def\lip{\mbox{\footnotesize\rm Lip}}
\def\clip{C^{\lip}}
\def\mtb{ \mt_{\dk}}

\begin{abstract}
In this paper we prove two asymptotic estimates for pairs of closed trajectories
for open billiards similar to those established by Pollicott and Sharp \cite{kn:PoS2}
for closed geodesics on negatively curved compact surfaces. The first of these estimates
holds for general open billiards in any dimension. The more intricate second estimate
is established for open billiards satisfying the so called Dolgopyat type estimates. This class of
billiards includes all open billiards in the plane and open billiards in $\R^N$ ($N \geq 3$)
satisfying some additional conditions.\\
\end{abstract}

\:\:\:\:\:{\bf 2000 AMS Subject Classification:}\:Primary: 37D50, Secondary: 58J50\\

{\bf Key words:} open billiard, periodic reflecting rays, symbolic coding.

\section{Introduction}
\renewcommand{\theequation}{\arabic{section}.\arabic{equation}}
\setcounter{equation}{0}

In \cite{kn:PoS2} Pollicott and Sharp prove some interesting asymptotic estimates
concerning the distribution of lengths $\lambda(\gamma)$ of closed geodesics $\gamma$
on a compact surface $V$ of negative curvature. Given a finite symmetric set $S$ of generators
of the fundamental group $\pi_1(V)$, for each closed geodesic $\gamma$ on $V$ let
$|\gamma|$  denote the minimal number of elements of $S$ needed to write down an element of
$\pi_1(V)$ conjugate to $\gamma$. Given real numbers $a < b$, the first asymptotic estimate in 
\cite{kn:PoS2} concerns the number $\pi (n,[a,b])$ of all pairs $(\gamma, \gamma')$ of closed
geodesics with $|\gamma|, |\gamma'| \leq n$ and $a \leq \lambda(\gamma) - \lambda(\gamma') \leq b$,
and has the same form as that in Theorem 1 below except that the constant $h_0$ 
that appears in \cite{kn:PoS2} depends on the set of generators $S$. The second asymptotic  in  \cite{kn:PoS2} 
is much more delicate and involves a family of  intervals $I_n = [z+ \epsilon_n a, z+ \epsilon_n b]$, $z\in \R$,
where $\epsilon_n \to 0$  subexponentially (see the formula in Theorem 2 below which has the same form
as the one in \cite{kn:PoS2}). In the proof of this a crucial role is played by Dolgopyat's estimates \cite{kn:D}
which apply to any Anosov flow on a compact surface (and also to some Anosov flows on higher
dimensional compact manifolds), and in particular to geodesic flows on compact surfaces of negative curvature.

In this paper we prove similar asymptotic estimates for the billiard flow in the exterior of several strictly convex bodes in 
$\R^N$ ($N \geq 2$) having smooth boundaries  and satisfying the so called no eclipse condition (H) defined below. 
In this case there is a natural coding for the closed trajectories using configurations (admissible sequences) of convex bodies 
and the constant $h_0 >0$ that appears in the asymptotic formulae is just the topological entropy of the billiard ball map from
boundary to boundary, and $h_0$ depends only on the number of obstacles (see Sect. 4).
The first asymptotic estimate (Theorem 1) holds for any open billiard in any dimension.
As in \cite{kn:PoS2},  the second asymptotic (Theorem 2) relies heavily on Dolgopyat type estimates.  For open 
billiards  these estimates are available without any extra assumptions for $N = 2$ (\cite{kn:St2}) and under some additional 
conditions for  $N \geq 3$ (\cite{kn:St3}). On the other hand, Dolgopyat type estimates concern codings via Markov families, 
and therefore they are not readily applicable to the natural coding of billiard trajectories mentioned above. However a link 
between these two types of codings can  be established which turns out to be sufficiently convenient, so that Dolgopyat type 
estimates can be applied in the situation described above and this is one of the purposes of this work. 

Correlations for periods of periodic orbits have been studied earlier in the physical literature. It appears \cite{kn:Aal} was the first 
article in this area, where some conjectures were made and  numerical results for three chaotic systems were described. 
We refer to \cite{kn:PoS2} for other references concerning problems and results on correlations for periodic orbits
in the physical and mathematical literature.\\

We now proceed to state precisely the results in this paper.

Let $K$ be a subset of ${\R}^{N}$ ($N\geq 2$) of the form
$K = K_1 \cup K_2 \cup \ldots \cup K_{\kappa_0},$where $K_i$ are compact 
strictly convex disjoint domains in $\R^{N}$ with 
$C^r$ ($r \geq 3$) {\it boundaries} $\Gamma_i = \dk_i$ and $\kappa_0 \geq 3$. 
Set $\Omega = \overline{{\R}^N \setminus K}.$ 
Throughout this paper we assume that $K$ satisfies the following (no-eclipse) condition: 
$${\rm (H)} \quad \quad\qquad  
\begin{cases}
\mbox{\rm for every pair $K_i$, $K_j$ of different connected components 
of $K$ the convex hull of}\cr
\mbox{\rm $K_i\cup K_j$ has no
common points with any other connected component of $K$. }\cr
\end{cases}$$
With this condition, the {\it billiard flow} $\phi_t$ defined on the {\it cosphere bundle} $S^*(\Omega)$ 
in the standard way is called an open billiard flow.
It has singularities, however its restriction to the {\it non-wandering set} $\Lambda$ has only 
simple discontinuities at reflection points.   Moreover, $\Lambda$  is compact, $\phi_t$ is hyperbolic and transitive
on $\Lambda$, and  it follows from  \cite{kn:St1} that $\phi_t$ is  non-lattice and therefore by  a result
of  Bowen \cite{kn:B}, it is topologically weak-mixing on $\Lambda$.

Given a periodic billiard trajectory (ray)  $\gamma$ in $ \Omega$, 
denote by $d_{\gamma}$ the period (return time) of
$\gamma$, and by $T_{\gamma}$ the  primitive period (length) of $\gamma$.
For any configuration ${\bf j} = (j_1,j_2,...,j_m)$ with $j_{\mu} \in \{1,...,\kappa_0\},\: j_i \not= j_{i+1}$ and $|\jj| = m,$ there exists an unique 
periodic reflecting ray $\gamma$ with reflecting points on $\Gamma_{j_1},...,\Gamma_{j_m}$ following the configuration $\jj$ (see \cite{kn:I1}, 
\cite{kn:PS1}) and we set $|\gamma| = |\jj| = m.$ 
We denote by $m_{\gamma}$ the number of reflections of $\gamma$ and by 
$P_{\gamma}$ the linear Poincar\'e map related to $\gamma$ (see \cite{kn:PS1}). 
For $a < b$ consider
$$\pi(n, [a, b]) = \# \{(\gamma, \gamma'):\: |\gamma|, |\gamma'| \leq n,\: a \leq T_{\gamma} - T_{\gamma'} \leq b\}$$
and denote by $h_0 > 0$ the {\it topological entropy} of the billiard ball map related to $\phi_t$
which coincides with the topological entropy of the shift map $\sigma$ on a naturally defined symbol space $\sa$ 
(see Subsect. 2.2 for the notation). This implies $h_0 = \log \lambda$, where $\lambda > 1$ is the maximal 
positive simple eigenvalue of the matrix $A$, so $h_0$ depends only on the number  $\kappa_0$ of connected components
of $K$. Then we have 
$$\# \{ \gamma:\: |\gamma| \leq n\} \sim \frac{e^{h_0}}{e^{h_0} -1} \frac{e^{h_0 n}}{n},\: n \to +\infty.$$

Our first result is the following

\begin{thm} 
There exists $\beta > 0$ such that for any $a < b$ we have
$$\pi(n, [a, b]) \sim \frac{(b-a) e^{2h_0}}{(2\pi)^{1/2} \beta(e^{h_0} - 1)^2} \frac{e^{2h_0 n}}{n^{5/2}},\: n \rightarrow +\infty.$$
\end{thm}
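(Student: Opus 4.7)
The plan is to translate the counting problem to symbolic dynamics and reduce Theorem~1 to a local central limit theorem (LCLT) for Birkhoff sums of the length function on the subshift of finite type $(\saa, \sigma)$. Each closed billiard trajectory $\gamma$ with $|\gamma|=m$ corresponds to a prime periodic $\sigma$-orbit of period $m$ in $\saa$, and its length is the Birkhoff sum $T_\gamma = S_m\tau(\iu) = \sum_{k=0}^{m-1}\tau(\sigma^k\iu)$, where $\tau:\saa\to\R$ is the H\"older length function recording distance between consecutive reflection points. Under this coding, $\pi(n,[a,b])$ becomes a double sum over pairs of closed $\sigma$-orbits subject to a constraint on $S_m\tau - S_{m'}\tau$.

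The first ingredient is the prime-orbit asymptotic $\Pi(m)\sim e^{h_0 m}/m$ for closed $\sigma$-orbits of period $m$, a standard consequence of transfer-operator analysis for the mixing SFT $\saa$; summing recovers the asymptotic $\#\{\gamma:|\gamma|\leq n\}$ quoted in the introduction. The second is a LCLT for $T_\gamma$ at fixed combinatorial length: there exist positive constants $\bar\tau$ and $\beta$ such that, for any fixed $\Delta>0$ and uniformly in $x$,
\[
\#\{\gamma:|\gamma|=m,\; T_\gamma\in[x,x+\Delta]\}\;\sim\;\Pi(m)\cdot\frac{\Delta}{\beta\sqrt{\pi m}}\,\exp\!\left(-\frac{(x-m\bar\tau)^2}{m\beta^2}\right),
\]
where $\bar\tau$ is the mean of $\tau$ under the equilibrium measure $\mu_0$ of $-h_0\tau$ (the measure of maximal entropy on $\saa$) and $\beta>0$ is the corresponding variance parameter. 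The proof is standard Fourier analysis: non-arithmeticity of $\tau$ --- equivalent to the non-lattice property of $\phi_t$ on $\mt$ from \cite{kn:St1} --- gives a spectral gap for the perturbed Ruelle transfer operator $L_{(-h_0+i\xi)\tau}$ for each $\xi\in\R\setminus\{0\}$, and Fourier inversion delivers the Gaussian LCLT.

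Convolving two copies of the LCLT for a fixed pair $(m,m')$ gives that the number of pairs $(\gamma,\gamma')$ with $|\gamma|=m$, $|\gamma'|=m'$ and $T_\gamma-T_{\gamma'}\in[a,b]$ is asymptotic to
\[
\Pi(m)\,\Pi(m')\,\frac{b-a}{\beta\sqrt{\pi(m+m')}}\,\exp\!\left(-\frac{((m-m')\bar\tau)^2}{(m+m')\beta^2}\right).
\]
Writing $m=n-k$ and $m'=n-k'$, the growth factor $\Pi(m)\Pi(m')\sim e^{2h_0 n-h_0(k+k')}/n^2$ concentrates the double sum on $k,k'=O(1)$, where the Gaussian factor in $m-m'$ is close to $1$. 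The resulting geometric series in $(k,k')$ sums to $(e^{h_0}/(e^{h_0}-1))^2$, while the density evaluated on the diagonal $m+m'\sim 2n$ contributes $(b-a)/(\beta\sqrt{2\pi n})$; multiplying these factors produces exactly the constant displayed in the theorem.

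The main technical obstacle is establishing the LCLT with enough uniformity in $m$ (and in $x$) for the double sum to converge to the predicted limit. This requires a spectral gap for $L_{(-h_0+i\xi)\tau}$ on a suitable H\"older space for every real $\xi\ne 0$, which follows from non-arithmeticity, together with decay of the spectral radius as $|\xi|\to\infty$ sufficient to make the Fourier-inversion error $o(1)$; for Theorem~1 only qualitative decay is needed, in contrast to the Dolgopyat-type polynomial decay that will be required for Theorem~2. A further technical point is bridging sums over periodic points and $\mu_0$-averages via Margulis--Bowen type equidistribution for closed orbits, with error terms uniform enough in $m$ to control the contributions from all $m,m'$ near $n$ simultaneously.
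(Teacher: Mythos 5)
Your overall strategy---translate to the subshift $\saa$, establish a local central limit theorem for the length sums $T_\gamma$ at fixed combinatorial length, convolve, and then carry out the geometric summation over $(m,m')$ near $n$---is the same transfer-operator route as the paper's, which (following Pollicott--Sharp) works directly with $F(x,y)=\tf(x)-\tf(y)$ on the product shift $\overline\sa=\sa\times\sa$; your LCLT-plus-convolution is just a factorized way of saying the same thing, since $L^m_{itF}1(x,y)=L^m_{it\tf}1(x)\cdot L^m_{-it\tf}1(y)$. The numerology you compute at the end (the geometric series $(e^{h_0}/(e^{h_0}-1))^2$, the $1/\sqrt{2\pi n}$ from the on-diagonal density, the $1/n^2$ from the two prime-orbit factors) matches the stated constant, so the combinatorial skeleton is right.

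There is, however, a genuine gap in the key analytic input. You write that ``non-arithmeticity of $\tau$ --- equivalent to the non-lattice property of $\phi_t$ on $\mt$ from \cite{kn:St1} --- gives a spectral gap for the perturbed Ruelle transfer operator for each $\xi\neq 0$.'' That equivalence is false, and the non-lattice property of the flow is not sufficient for the fixed-period LCLT you invoke. The non-lattice property says $\tau$ is not cohomologous to $aM$ for any $a>0$ and integer-valued $M$. The spectral gap of $L_{i\xi\tau}$ at \emph{every} $\xi\neq 0$ (which is exactly what the LCLT at fixed combinatorial length requires) demands the strictly stronger statement that $\tau$ is not cohomologous to $c+aM$ for \emph{any} constant $c\in\R$, $a>0$, and integer-valued $M$: if $\tau\sim c+aM$ with $c\neq 0$, the suspension flow is still non-lattice (closed orbit lengths $T_\gamma\in cm+a\Z$ are dense as $m$ varies), yet $L_{2\pi i\tau/a}$ has spectral radius $e^{h_0}$ and the conditional distribution of $T_\gamma$ given $|\gamma|=m$ lives on a lattice, so no LCLT holds. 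This is precisely why the paper proves Lemma~2, which rules out $F=\psi\circ\bar\sigma-\psi+M+c$ for \emph{all} $c\in\R$ by means of Stoyanov's $\alpha_k$-configuration estimates from \cite{kn:St1}; that argument is an extra step, not a restatement of the non-lattice property. To repair your proof you would need the analogous statement for the single function $\tau$ on $\saa$ (which again follows from the same $\alpha_k$ construction), but as written your argument skips this. Two smaller notational slips: the operator relevant for the fixed-$m$ LCLT is $L_{i\xi\tau}$ (or its normalization $e^{-h_0}L_{i\xi\tau}$), not $L_{(-h_0+i\xi)\tau}$; and the measure governing the fixed-$m$ equidistribution of closed orbits is the measure of maximal entropy of the shift, i.e.\ the equilibrium state of $0$, not the equilibrium state of $-h_0\tau$, which is a different measure.
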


This estimate is derived from Lemma 1 in  Sect. 4 below and the argument from the proof of the first result in \cite{kn:PoS2}. 

To obtain a more precise result we use the  Dolgopyat type estimates (3.14) from Subsection 3.3 below.
Consider a sequence of intervals $I_n(z) = [z + \epsilon_n a, z + \epsilon_n b]$, where $\epsilon_n \searrow 0$. We say that $\epsilon_n$ goes to 0 
{\it subexponentially} if $\limsup_{n \to +\infty} |\log \epsilon_n|/n = 0.$ 
Using the notation of Sect. 2 and 3, the second result in this paper is the following analogue of Theorem 2 in \cite{kn:PoS2}.

\begin{thm} Assume that the estimates $(\ref{eq:3.14})$ hold for the Ruelle operator $\ll_{itr -h_0}, \: |t| \geq t_0 > 1$, 
and that the strong stable and the strong unstable laminations $\{ W^s_{\ep}(x)\}_{x\in \mt}$ and
$\{ W^u_{\ep}(x)\}_{x\in \mt}$ are Lipschitz in $x \in \mt$. 
Then there exists $\beta > 0$ such that for any $a < b$ and for every sequence $\epsilon_n$ going to 0 subexponentially, we have
$$\lim_{n \to +\infty} \sup_{z \in \R} \Bigl| \frac{\beta n^{5/2}}{\epsilon_n e^{2h_0 n}} \pi(n, I_n(z)) - \frac{(b-a) e^{2h_0}}{(2\pi)^{1/2} (e^{h_0} - 1)^2} e^{-z^2/2\beta^2n}\Bigr| = 0.$$
In particular, for any fixed $z \in \R$ we get
$$\pi(n, I_n(z)) \sim \frac{(b-a) e^{2h_0}\epsilon_n}{(2\pi)^{1/2} \beta(e^{h_0} - 1)^2}\frac{e^{2h_0 n}}{n^{5/2}},\: n \to +\infty.$$
\end{thm}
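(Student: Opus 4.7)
\emph{Fourier-analytic reduction.} The plan is to represent $\mathbf{1}_{I_n(z)}$ by its Fourier transform (after standard smoothing with functions $\chi_n^\pm$ sandwiching $\mathbf{1}_{I_n(z)}$ whose contribution is lower order) and write
$$\pi(n, I_n(z)) = \frac{1}{2\pi}\int_{\R} \hat{\chi}_n(t)\, \Bigl| \sum_{|\gamma|\leq n} e^{-it T_\gamma}\Bigr|^2\, dt + (\text{smoothing error}),$$
where $\hat{\chi}_n(t) = \ep_n\, e^{-itz}\, \psi(\ep_n t)$ for a Schwartz function $\psi$ with $\psi(0)=b-a$. Lemma 1 of Section 4, the counting/trace identity already used to derive Theorem 1, then expresses the exponential sum in terms of iterates of the Ruelle transfer operator $\ll_{itr-h_0}$ acting on a Lipschitz function space over $\saa$, where $r$ is the roof (return-time) function of the Markov-family coding. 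The Lipschitz hypothesis on the strong stable and strong unstable laminations $\{W^s_\ep(x)\}$, $\{W^u_\ep(x)\}$ is exactly what is needed to place Lemma 1 and the Dolgopyat estimate (3.14) on the same function space.

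\emph{Three-range decomposition.} Next I split the $t$-integration into $|t|\leq \delta_n$, $\delta_n\leq|t|\leq t_0$, and $|t|\geq t_0$, for a slowly shrinking $\delta_n$ (e.g.\ $\delta_n=(\log n)/\sqrt n$). On $|t|\leq \delta_n$, analytic perturbation of $\ll_{itr-h_0}$ near $t=0$ provides a simple leading eigenvalue $\lambda(t)$ with $|\lambda(t)|^{2}=1-\beta^{2}t^{2}+O(t^{3})$, where $\beta^{2}$ is the asymptotic variance of the centred return-time cocycle under the equilibrium state; the non-lattice property of $\phi_t$ forces $\beta>0$. Thus $|\lambda(t)|^{2n}\sim e^{-\beta^{2}t^{2}n}$, and the rescaling $t=s/(\beta\sqrt n)$ turns this portion of the integral into a Gaussian whose evaluation
$$\int_{\R} e^{-s^{2}/2}\, e^{-isz/(\beta\sqrt n)}\, ds = \sqrt{2\pi}\, e^{-z^{2}/(2\beta^{2}n)},$$
combined with the squared diagonal count $\bigl(e^{h_0}/(e^{h_0}-1)\bigr)^{2}e^{2h_0 n}/n^{2}$ from Theorem 1, yields precisely the claimed leading asymptotic uniformly in $z$.

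\emph{Bounded and high-frequency tails; main obstacle.} On the middle range $\delta_n\leq|t|\leq t_0$, a uniform spectral gap forces $|\lambda(t)|\leq 1-c\delta_n^{2}$ and hence a contribution of size $O(e^{-c(\log n)^{2}})$, which is negligible. The main obstacle I foresee lies in the tail $|t|\geq t_0$: the Dolgopyat estimate (3.14) yields only $\|\ll_{itr-h_0}^{n}\|\leq C|t|^{A}\rho^{n}$ with $0<\rho<1$ and polynomial loss $|t|^{A}$, while the smoothed cutoff gives $|\hat{\chi}_n(t)|\leq C_M\,\ep_n\,(1+\ep_n|t|)^{-M}$ for any $M$. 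Combining these bounds and changing variables $u=\ep_n t$ yields a contribution of order $\rho^{n}\,\ep_n^{-A}$ (up to logarithms), which must be dominated by the main term of order $\ep_n\, e^{2h_0 n}\, n^{-5/2}$. This is exactly where the hypothesis $|\log \ep_n|/n\to 0$ enters: subexponential decay of $\ep_n$ guarantees that $\rho^{n}\ep_n^{-A-1}$ still decays exponentially in $n$, absorbing both the polynomial $n^{5/2}$ and the extra $\ep_n^{-1}$. Uniformity in $z\in\R$ comes for free, since $z$ enters only through the phase $e^{-itz}$ in $\hat\chi_n$ and none of the estimates in the three ranges depends on this phase.
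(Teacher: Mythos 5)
Your overall Fourier-analytic scheme — sandwich $\mathbf{1}_{I_n(z)}$ by smoothed cutoffs, express $\pi(n, I_n(z))$ as an integral of a squared exponential sum against $\hat\chi_n$, and split the $t$-integral into a CLT window near $0$, a compact middle range controlled by a spectral gap, and a Dolgopyat-controlled tail — is essentially the Pollicott--Sharp template the paper also follows. Two points, however, are genuine gaps rather than shorthand.

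First, your assertion that \emph{``the non-lattice property of $\phi_t$ forces $\beta>0$''} does not do the work you need it to. The non-lattice property of the flow says that $\tilde f$ is not cohomologous on $\Sigma_A^+$ to a function taking values in $c_0+a\Z$. What is actually required for $\beta>0$ is the stronger assertion that the \emph{product} cocycle $F(x,y)=\tilde f(x)-\tilde f(y)$ on $\overline{\Sigma}_A=\Sigma_A\times\Sigma_A$ is not $\bar\sigma$-cohomologous to $M+c$ with $M$ valued in $a\Z$. This is a statement about all periodic points of the \emph{same} discrete period $n$ in both factors and does not follow formally from the flow-level non-lattice property (which mixes orbits of all periods). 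The paper's Lemma 2 proves exactly this, and the proof is not soft: it requires the explicit family of configurations $\alpha_k=(1,2,\dots,1,2,3,1)$ from Lemma 5.2 of \cite{kn:St1} together with the quantitative length estimate $T_{k-1}+4d<T_k<T_{k-1}+4d+C\delta^{2k-4}$ to derive a contradiction. Without an argument at this level of precision your $\beta$ could a priori vanish.

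Second, you acknowledge but do not actually cross the bridge between the two codings, which is the principal technical point of the paper. The exponential sums $\sum_{|\gamma|\le n}e^{-itT_\gamma}$ live naturally on the obstacle symbol space $\Sigma_A^+$ and are iterates of $L_{it\tilde f-h_0}$ there, whereas the Dolgopyat-type estimate (3.14) is proved for $\ll_{itr-h_0}$ on the Markov-family space $\Sigma_{\mathcal A}^+$ and holds only on the subspace $\clip(U)$. One cannot simply ``place Lemma 1 and (3.14) on the same function space'' by fiat: the paper spends Section 3 establishing the explicit conjugacy $(\ref{eq:3.13})$ via the multipliers $\hh_s$, $\hd_s$, proving (Lemma 1) that they are Lipschitz with $\Lip\le C|t|$ so that $\|\cdot\|_{\lip,t}$ is controlled, and matching pressures $(\ref{eq:3.15})$ to make the exponents in (3.14) usable on the $\tilde f$ side (Theorem 4). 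That chain of lemmas is what makes your Lemma-4-type bound $\|L^n_{itF}1\|_\infty\le Ce^{2h_0 n}\min\{\rho^n|t|^\alpha,1\}$ legitimate, and it cannot be skipped. Your tail analysis and the use of subexponential decay of $\ep_n$ to absorb the polynomial loss in $|t|$ are then correct and match the paper's Lemma 5.

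Minor points: the sum should first be reduced to prime orbits (the $1/(nm)$ weights in the paper's $\Xi_N(\chi)$) before invoking the transfer operator, and the normalization you place on the operator differs from the paper's by an explicit $e^{2h_0n}$, but these are bookkeeping, not gaps.
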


Notice that the assumptions of Theorem 2 are satisfied for $N = 2$ without any additional geometric conditions. The central point in the proof of 
Theorem 2 is Lemma 4 in Sect. 4, where the Dolgopyat type estimates are used.
We should remark that Lemma 4 deals with  Ruelle transfer operators defined by means of the symbolic coding $\saa$ using  the 
connected components $K_j$ of $K$ (see Sect. 2 for the notation). This coding is very natural and most convenient for a variety 
of problems where the open billiard flow is involved. However $\saa$  does not have the properties of a coding by means of
a Markov family and therefore Dolgopyat type estimates do not automatically apply to it. One needs to make a
transition from one coding to the other, and  this is done in Sects. 2 and 3 below. As a result of this  one 
identifies a class of functions on the symbolic model $\saa$ to which the estimates (3.14) can be applied. Unfortunately, this class 
of functions is not of the form  ${\mathcal F}_{\theta}(\saa)$ for some $\theta > 0$, and it is very doubtful that Dolgopyat type estimates 
would apply to all functions in ${\mathcal F}_{\theta}(\saa)$.

This rather serious difficulty appears also in the analysis of the approximation of the cut-off resolvent of the Dirichlet Laplacian in \cite{kn:PS2},
where we deal with phases and amplitudes related to the connected components $K_j$ of $K$ (and not to Markov sections $R_i$) to build an 
approximation. To overcome it, we use a link between the Ruelle operators $L_{-s\tf + \tg}$ and $\ll_{-sr + hg}$ 
corresponding to each other via the transition from one coding to the other.
The dynamical zeta function  related to the billiard flow is independent of the choice of coding and by using thermodynamic formalism 
(see e.g. \cite{kn:PP}) one can show that the eigenvalues of the operators $L_{-s\tf + \tg}$ and $\ll_{-sr + \hg}$ coincide with their multiplicities. 
However this does not imply similar estimates for the norms of the iterations of these operators.  To get such estimates
we use the  explicit link between $L_{-s\tf + \tg}$ and $\ll_{-sr + \hg}$  established in  Proposition 4 below for a special class of functions. From the analysis in Sect. 4 we need to have estimates for the operators $L_{-s\tf - h_0}^n$ and this corresponds to the case when $\tg = -h_0.$ 
It is important to note that we may code the periodic rays using a Markov family, 
however different Markov families will lead to rather different symbolic models, 
possibly with different number of symbols. The coding using the connected components
of $K$ is very natural and has a clear geometrical and physical meaning.

In the proofs of Theorems 1 and 2 (see Sect. 4) we use the analytic arguments in \cite{kn:PoS2} with minor changes, 
so we omit the details. Most of the arguments below concern the natural  symbolic model for the open billiard flow.
On the other hand, our result in Sect. 3 shows that there exists a class of functions for which the Dolgopyat estimates hold 
for the Ruelle operator related to the coding with obstacles and this has been applied in \cite{kn:PS2}
in the analysis of the analytic continuation of the cut-off resolvent of the Laplacian.

In Sect. 5 we discuss some open problems related to correlations of pairs of periodic trajectories when $\epsilon_n$ goes to 0 faster than $e^{-h_0 n}$ 
and their relationship with some separation conditions.

\section{Symbolic codings}
\renewcommand{\theequation}{\arabic{section}.\arabic{equation}}
\setcounter{equation}{0}

In this section we compare the Ruelle transfer operators related to two different codings
of the billiard flow $\phi_t$ on $\mt$ -- the first of these is related to a Markov family for $\mt$, while the
second is using the boundary components $\dk_i$.

Fix a {\it large ball} $B_0$ containing $K$ in its interior. 
For any $x\in  \Gamma = \partial K$ we will denote by
$\nu(x)$ the {\it outward unit normal} to $\Gamma$ at $x$.

It follows from Lemma 3.1 in \cite{kn:I1} that there exist $\delta_1 > 0$ and 
$0 < d_0 < \frac{1}{2}\min_{i\neq j} \dist (K_i, K_j)$
such that for any $i = 1, \ldots,\kappa_0$, $x\in \Gamma_i$ and $\xi \in \sN$ with 
$0 \leq \langle \xi, \nu (x)\rangle \leq \delta_1$ and $\eta = \xi -2 \la \xi, \nu(x) \ra \nu(x)$, at least one of the rays $\{ x+ t\xi : t\geq 0\},\: \{x +t\eta,\: t \leq 0\}$  has no common points with $\cup_{j\neq i} B(K_j,d_0)$, where $$B(A,d_0) = \{ y+ w : y\in A, w\in \R^N , \|w\| \leq d_0\}\;.$$

Let $z_0 = (x_0,u_0)\in S^*(\Omega)$. Denote by
$$X_1(z_0), X_2(z_0), \ldots, X_{m}(z_0), \ldots\;$$
the successive {\it reflection points} (if any) of the {\it forward trajectory} 
$$\gamma_+(z_0) = \{ \pr_1(\phi_{t}(z_0)) : 0 \leq t \}\;.$$
Similarly, we will denote by
$\gamma_-(z_0)$ the {\it backward trajectory} determined by $z_0$ and by 
$$\ldots, X_{-m}(z_0),  \ldots, X_{-1}(z_0), X_0(z_0)$$
its backward reflection points (if any).
If $\gamma (z_0) = \gamma_+(z_0) \cup \gamma_-(z_0)$ is bounded 
(i.e. it has infinitely many reflection points both forwards and backwards), we will 
say that it {\it has an itinerary} $\eta = (\eta_j)_{j=\infty}^\infty$ (or that it follows
the configuration $\eta$) if $X_j(z_0)\in \partial K_{\eta_j}$ for all
$j \in \Z$. We will say that the itinerary $\eta$ is {\it admissible} if
$\eta_j \neq \eta_{j+1}$ for all $j$. 

The following is a consequence of the hyperbolicity of the billiard
flow in the exterior of $K$ and can be derived from
the works of Sinai on general dispersing billiards (\cite{kn:Si1},
\cite{kn:Si2})  and from Ikawa's papers on open billiards
(\cite{kn:I1}; see also \cite{kn:Bu}). In this particular form it
can be found in \cite{kn:Sj} (see also Ch. 10 in  \cite{kn:PS1}).\\

\begin{prop} There exist global constants $C > 0$ and $\alpha\in (0,1)$ such that for any
admissible  configuration $\ii =  (i_0,i_1, i_2, \ldots, i_m)$ and any
two  billiard trajectories in $\Omega$
with successive reflection points $x_0,x_1, \ldots,x_m$ and $y_0,y_1, \ldots,y_m$, both 
following the configuration $\ii$, we have
$$\| x_j - y_j \| \leq C \, (\alpha^j + \alpha^{m-j}) \quad, \quad 0 \leq j \leq m\;.$$
Moreover, $C$ and $\alpha$ can be chosen so that if 
$(x_0, (x_1-x_0)/\|x_1-x_0\|)$ and $(y_0,  (y_1-y_0)/\|y_1-y_0\|)$ belong to the same
unstable manifold of the billiard flow,  then
$$\| x_j - y_j \| \leq C \, \alpha^{m-j} \quad, \quad 0 \leq j \leq m\;.$$
\end{prop}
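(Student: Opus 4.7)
The plan is to deduce both inequalities from the uniform hyperbolicity of the billiard ball map on orbits with a fixed itinerary, proving the sharper (unstable) estimate first and then combining it with its stable mirror via the local product structure of the hyperbolic set.

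First I would recall the wavefront description of hyperbolicity for open billiards (as in \cite{kn:Si1}, \cite{kn:I1}, \cite{kn:PS1}). To each phase point $(x,\xi) \in S^*(\Omega)$ one attaches a local convex wavefront; under alternating free flight and reflection on a strictly convex $\Gamma_i$, the curvature of the propagated wavefront is bounded below by some $\kamin > 0$ uniformly over admissible itineraries, thanks to strict convexity and condition (H) (which prevents near-tangential encounters). This yields a constant $\alpha \in (0,1)$ such that: (a) if $(x_0,\xi_0)$ and $(z_0,\zeta_0)$ lie in the same local unstable manifold $\wuloc(x_0,\xi_0)$ and both trajectories follow the admissible finite configuration $\ii=(i_0,\dots,i_m)$, then the corresponding reflection points satisfy $\|x_{j-1}-z_{j-1}\| \le \alpha\,\|x_j - z_j\|$ for $1\le j\le m$ (backward contraction on $W^u$); and (b) the symmetric statement with $W^s_{loc}$ gives $\|x_{j+1}-z_{j+1}\| \le \alpha\,\|x_j-z_j\|$ (forward contraction on $W^s$).

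Next I would prove the second (sharper) inequality. Assuming $(x_0,(x_1-x_0)/\|x_1-x_0\|)$ and $(y_0,(y_1-y_0)/\|y_1-y_0\|)$ belong to the same local unstable manifold and both orbits follow $\ii$, the endpoints $x_m,y_m$ both lie on $\Gamma_{i_m}$, so $\|x_m-y_m\| \le \diam(\Gamma_{i_m}) \le D_0$ for a global constant $D_0$. Iterating the backward unstable contraction from index $m$ down to index $j$ yields
\[
\|x_j-y_j\| \;\le\; \alpha^{m-j}\,\|x_m-y_m\| \;\le\; D_0\,\alpha^{m-j},
\]
which is the second estimate with $C = D_0$.

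For the general case, I would invoke the local product structure (bracket) of the hyperbolic set $\mt$. Given $(x_0,u_0)$ and $(y_0,v_0)$ with the same itinerary $\ii$, there exists a bracket point $(z_0,w_0) \in W^u_{\ep}(x_0,u_0) \cap W^s_{\ep}(y_0,v_0)$; since points on $W^u_{\ep}(x_0,u_0)$ share the backward itinerary of $(x_0,u_0)$ and points on $W^s_{\ep}(y_0,v_0)$ share the forward itinerary of $(y_0,v_0)$, the orbit of $(z_0,w_0)$ also follows $\ii$ for indices $0,\dots,m$. Applying step two to the pair $(x_0,z_0)$ gives $\|x_j-z_j\| \le C\alpha^{m-j}$, while the stable analogue applied to $(z_0,y_0)$ (with $\|z_0-y_0\|$ controlled by a constant and iterated forward from index $0$) gives $\|z_j-y_j\| \le C\alpha^{j}$. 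The triangle inequality then produces the first estimate with a slightly enlarged $C$.

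The main obstacle is step one: establishing the uniform contraction factor $\alpha<1$ along unstable manifolds across an arbitrary number of reflections. This requires quantitative control of how the second fundamental form of a convex wavefront evolves under reflection on $\Gamma_i$ and free flight, and demands the uniform transversality supplied by (H) via Ikawa's Lemma 3.1 (invoked already in the setup of $\delta_1$ and $d_0$ above). Once this curvature bound is in hand, the rest of the argument is the standard hyperbolic-set shadowing structure.
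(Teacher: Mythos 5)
The paper does not actually prove Proposition~1: it only cites Sinai, Ikawa, Burq, Sj\"ostrand and Ch.~10 of \cite{kn:PS1} as sources, so your proposal has to be judged on its own merits rather than compared against an in-text argument. Your first two steps (wavefront convexity giving a uniform backward contraction rate $\alpha$ on divergent fronts, and then iterating that from $x_m,y_m\in\Gamma_{i_m}$ to get $\|x_j-y_j\|\le D_0\alpha^{m-j}$ on a common unstable leaf) are the standard and correct route and would be accepted as a sketch.

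There is, however, a genuine gap in step three. The bracket $[\,\cdot\,,\,\cdot\,]$ is only defined for pairs of points of $\mt$ at distance $<\delta$, and there is no reason for $(x_0,u_0)$ and $(y_0,v_0)$ to be $\delta$-close: a long common configuration pins the point $(x_0,u_0)$ to an exponentially thin strip in the stable direction, but along the unstable direction the endpoints may still differ by $O(1)$, so in general $W^u_\ep(x_0,u_0)\cap W^s_\ep(y_0,v_0)=\emptyset$. Showing that the two phase points become $\delta$-close at some intermediate index (so the bracket could be taken there) would already amount to the inequality you are trying to prove, so that route is circular. An additional, more minor, difficulty is that the proposition concerns \emph{finite} billiard trajectories that need not extend to points of $\mt$, so $W^{u/s}_\ep$ and $[\,\cdot\,,\,\cdot\,]$ are not even a priori available for them. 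The clean fix, which also resolves the second point, is purely billiard-theoretic: by the Birkhoff/Ikawa constrained length-minimization argument (condition (H) guarantees unobstructed broken geodesics between any $q_0\in\Gamma_{i_0}$ and $q_m\in\Gamma_{i_m}$ through $\Gamma_{i_1},\dots,\Gamma_{i_{m-1}}$) there is a unique billiard trajectory $z_0,\dots,z_m$ following $\ii$ with $z_0=x_0$ and $z_m=y_m$. Then $x$ and $z$ share the initial reflection point, so the displacement between them propagates as a divergent (unstable-type) front and your step-two argument gives $\|x_j-z_j\|\le D_0\alpha^{m-j}$; dually $z$ and $y$ share the final reflection point and the convergent-front version gives $\|z_j-y_j\|\le D_0\alpha^{j}$; the triangle inequality then yields the first estimate. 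With this replacement your outline becomes correct; I would also note that the real analytic content is hidden in step one (the quantitative evolution of the second fundamental form across reflections, which you rightly flag as using (H) through Ikawa's Lemma~3.1), and that the rest is bookkeeping.
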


\ms

As a consequence of this one obtains that there is an one-to-one
correspondence between the bounded (in both directions) billiard
trajectories in $\Omega$ and the set of admissible itineraries
$\eta$. In particular this implies that the intersections of the
non-wandering set $\mt$ with cross-sections to the billiard flow
$\phi_t$ are Cantor sets.

For $x\in \Lambda$ and a sufficiently small $\epsilon > 0$ let 
$$W^s_\ep(x) = \{ y \in S^*(\Omega) : d (\phi_t(x),\phi_t(y)) \leq \epsilon \: \mbox{\rm for all }
\: t \geq 0 \; , \: d (\phi_t(x),\phi_t(y)) \to_{t\to \infty} 0\: \}\; ,$$
$$W^u_\ep(x) = \{ y \in S^*(\Omega) : d (\phi_t(x),\phi_t(y)) \leq \epsilon \: \mbox{\rm for all }
\: t \leq 0 \; , \: d (\phi_t(x),\phi_t(y)) \to_{t\to -\infty} 0\: \}$$
be the (strong) {\it stable} and {\it unstable manifolds} of size $\epsilon$, and let
$E^u(x) = T_x W^u_{\ep}(x)$ and $E^s(x) = T_xW^s_{\ep}(x)$. 
For any $A \subset S^*(\Omega)$ and  $I \subset \R$  denote
$\phi_I(A) = \{\; \phi_t(y)\; : \; y\in A, t \in I \; \}.$

It follows from the hyperbolicity of $\mt$  (cf. e.g. \cite{kn:KH}) that if  
$\epsilon > 0$ is sufficiently small,
there exists $\delta > 0$ such that if $x,y\in \mt$ and $d (x,y) < \delta$, 
then $W^s_\ep(x)$ and $\phi_{[-\ep,\ep]}(W^u_\ep(y))$ intersect at exactly 
one point $[x,y ] \in \mt$. That is, $[x,y]\in W^s_\ep(x)$ and there exists a unique $t\in [-\ep, \ep]$ 
such that
$\phi_t([x,y]) \in W^u_\ep(y)$. Setting $\Delta(x,y) = t$, one defines the
so  called {\it temporal distance function} $\Delta(x, y).$ 
For $x, y\in \mt$ with $d (x,y) < \delta$, define
$$\pi_y(x) = [x,y] = W^s_{\ep}(x) \cap \phi_{[-\ep,\ep]} (W^u_{\ep}(y))\;.$$
Thus, for a fixed $y \in \mt$, $\pi_y : W \longrightarrow \phi_{[-\ep,\ep]} (W^u_{\ep}(y))$ is the
{\it projection} along local stable manifolds defined on a small open 
neighborhood $W$ of $y$ in $\mt$.

\ms

\subsection{Coding via a Markov family}

Given $E\subset \mt$ we will denote by $\Int_{\mt}(E)$ and $\partial_{\mt} E$  the {\it interior} 
and the {\it boundary} of the subset $E$ of $\mt$ in the topology of $\mt$, and by $\diam(E)$ the
{\it diameter} of $E$. We will say that
$E$ is an {\it admissible subset} of $W^u_{\ep}(z) \cap \mt$ ($z\in \mt$) if $E$ coincides with the closure
of its interior in $W^u_\ep(z) \cap \mt$. Admissible subsets of $W^s_\ep(z) \cap \mt$ are defined similarly.
Following \cite{kn:D}, a subset $R$ of $\mt$ will be called a {\it rectangle} if it has the form
$$R = [U,S] = \{ [x,y] : x\in U, y\in S\}\;,$$ 
where $U$ and $S$ are admissible 
subsets of $W^u_\ep(z) \cap \mt$ and $W^s_\ep(z) \cap \mt$, respectively, for some $z\in \mt$. 
For such $R$, given $\xi = [x,y] \in R$, we will denote $W^u_R(\xi) = \{ [x',y] : x'\in U\}$ and
$W^s_R(\xi) = \{[x,y'] : y'\in S\} \subset W^s_{\ep'}(x)$.

Let $\rr = \{ R_i\}_{i=1}^k$ be a family of rectangles with $R_i = [U_i  , S_i ]$,
$U_i \subset W^u_\ep(z_i) \cap \mt$ and $S_i \subset W^s_\ep(z_i)\cap \mt$, respectively, 
for some $z_i\in \mt$.   Set 
$$R =  \cup_{i=1}^k R_i\; .$$
The family $\rr$ is called {\it complete} if  there exists $T > 0$ such that for every $x \in \mt$,
$\phi_{t}(x) \in R$ for some  $t \in (0,T]$.  The {\it Poincar\'e map} $\pp: R \longrightarrow R$
related to a complete family $\rr$ is defined by $\pp(x) = \phi_{\tau(x)}(x) \in R$, where
$\tau(x) > 0$ is the smallest positive time with $\phi_{\tau(x)}(x) \in R$.
The function $\tau$  is called the {\it first return time} 
associated with $\rr$. Notice that  $\tau$ is constant on each of the set $W_{R_i}^s(x)$, $x \in R_i$. 
A complete family $\rr = \{ R_i\}_{i=1}^k$ of rectangles in $\mt$ is called a 
{\it Markov family} of size $\chi > 0$ for the  flow $\phi_t$ if $\diam(R_i) < \chi$ for all $i$ and: 

\ms

(a)  for any $i\neq j$ and any $x\in \Int_{\mt}(R_i) \cap \pp^{-1}(\Int_{\mt}(R_j))$ we have   
$$\pp(\Int_{\mt} (W_{R_i}^s(x)) ) \subset \Int_{\mt} (W_{R_j}^s(\pp(x)))\quad, \quad 
\pp(\Int_{\mt}(W_{R_i}^u(x))) \supset \Int_{\mt}(W_{R_j}^u(\pp(x)))\; ;$$

(b) for any $i\neq j$ at least one of the sets $R_i \cap \phi_{[0,\chi]}(R_j)$ and
$R_j \cap \phi_{[0,\chi]}(R_i)$ is empty.

\ms

The existence of a Markov family $\rr$ of an arbitrarily small size $\chi > 0$ for $\phi_t$
follows from the construction of Bowen \cite{kn:B} (cf. also  Ratner \cite{kn:Ra}). 
Taking $\chi$ sufficiently small, we may assume that each rectangle $R_i$ is `between
two boundary components' $\Gamma_{p_i}$ and $\Gamma_{q_i}$ of $K$, that is for any
$x\in R_i$, the first backward reflection point of the billiard trajectory $\gamma$ determined by $x$
belongs to $\Gamma_{p_i}$, while the first forward reflection point of $\gamma$ belongs to $\Gamma_{q_i}$.

Moreover, using the fact that the intersection of $\mt$ with each
cross-section to the flow $\phi_t$ is a Cantor set, we may assume that the Markov family
$\rr$ is chosen in such a way that

\ms

(c)  for any $i = 1, \ldots, k$ we have $\partial_\mt R_i  = \e$.

\ms

Finally, partitioning every $R_i$ into finitely many smaller rectangles, cutting $R_i$ along some unstable leaves, and removing
some rectangles from the family formed in this way, we may assume that

\ms

(d) for every $x\in R$ the billiard trajectory of $x$ from $x$ to $\pp(x)$ makes exactly one
reflection.

\ms

From now on we will assume that $\rr = \{ R_i\}_{i=1}^k$ is a fixed Markov family for  $\phi_t$
of size $\chi < \ep_0/2$ satisfying the above conditions (a)--(d). Set  
$$U = \cup_{i=1}^k U_i\;.$$
The {\it shift map} $\ts : U   \longrightarrow U$ is given by
$\ts  = \piU \circ \pp$, where $\piU : R \longrightarrow U$ is the {\it projection} along stable leaves.

Let $\aa = (\aa_{ij})_{i,j=1}^k$ be the matrix given by $\aa_{ij} = 1$ 
if $\pp(\Int_{\mt}(R_i)) \cap \Int_{\mt}(R_j) \neq  \e$ and $\aa_{ij} = 0$ otherwise. 
Consider the  symbol space 
$$\Sigma_\aa = \{  (i_j)_{j=-\infty}^\infty : 1\leq i_j \leq k, \aa_{i_j\; i_{j+1}} = 1
\:\: \mbox{ \rm for all } \: j\; \},$$
with the product topology and the {\it shift map} $\sigma : \Sigma_\aa \longrightarrow \Sigma_\aa$ 
given by $\sigma ( (i_j)) = ( (i'_j))$, where $i'_j = i_{j+1}$ for all $j$.
As in \cite{kn:B} one defines a natural map 
$$\Psi : \sA \longrightarrow R\;.$$
Namely, given any 
$ \ii = (i_j)_{j=-\infty}^\infty \in \Sigma_\aa$ there
is exactly one point $x\in R_{i_0}$ such that $\pp^j(x) \in R_{i_j}$ for
all integers $j$. We then set $\Psi( (i_j)) = x$. One checks that
$\Psi \circ \sigma = \pp\circ \Psi$ on $R$.
It follows from the condition (c) above that {\bf the map $\Psi$ is a  bijection}.   
Moreover $\Psi$ is Lispchitz if $\sA$ is 
considered with  the {\it metric} $d_\theta$ for some  appropriately chosen $\theta\in (0,1)$, where
$d_\theta(\xi,\eta) = 0$ if $\xi = \eta$ and $d_\theta(\xi,\eta) = \theta^m$ if
$\xi_i = \eta_i$ for $|i| < m$ and $m$ is maximal with this
property. Replacing $\theta$ by an appropriate 
$\rho \in (\theta,1)$, makes $\Psi^{-1}$ a Lipschitz map.

In a similar way one deals with the one-sided subshift of finite type
$$\sAA = \{  (i_j)_{j=0}^\infty : 1\leq i_j \leq k, \aa_{i_j\; i_{j+1}} = 1
\:\: \mbox{ \rm for all } \: j \geq 0\; \},$$
where the {\it shift map} $\sigma : \sAA \longrightarrow \sAA$ is defined in
a similar way: $\sigma( (i_j)) = ( (i'_j))$, where $i'_j = i_{j+1}$
for all $j \geq 0$. The metric $d_\theta$ on $\sAA$ is defined as above.
One checks that there exists a unique map $\psi : \sAA \longrightarrow U$ such that 
$\psi\circ \pi = \pi^{(U)}\circ \Psi$, where $\pi : \sA \longrightarrow \sAA$
is the {\it natural projection}.

Notice that the {\it roof function} $r : \sA \longrightarrow [0,\infty)$ defined by
$r (\xi) = \tau(\Psi(\xi))$ depends only on the
forward coordinates of $\xi\in \Sigma_\aa$. Indeed, if $\xi_+ = \eta_+$, where
$\xi_+ = (\xi_j)_{j=0}^\infty$, then for $x = \Psi(\xi)$ and $y = \Psi(\eta)$
we have $x, y\in R_i$ for $i = \xi_0= \eta_0$ and $\pp^j(x)$ and  $\pp^j(y)$ 
belong to the same $R_{i_j}$ for all $j\geq 0$. This implies that $x$ and $y$ belong to
the same local stable fiber in $R_i$ and therefore $\tau(x) = \tau(y)$.
Thus, $r (\xi) = r (\eta)$. So, we can define a {\it roof function} $r : \sAA \longrightarrow [0,\infty)$ 
such that $r\circ \pi =  \tau\circ \Psi$.  

Setting $r_n(\xi) = r(\xi) + r(\sigma \xi) + \ldots + r(\sigma^{n-1}\xi)$
for any integer $n \geq 1$ and any $\xi\in \saa$, we have the following

\begin{prop} There exists a bijection $\psi : \sAA \longrightarrow U$
such that $\psi\circ \sigma = \ts \circ \psi$ and a function 
$r : \sAA \longrightarrow [0,\infty)$ such that for any integer 
$n \geq 1$ and any $\xi\in \sAA$ we have 
$r_n(\xi) =  \tau_n(\psi(\xi))$, i.e. this is the length of the
billiard trajectory in $\Omega$ determined by $\psi(\xi)$ from $\psi(\xi)$ to
the $n$th intersection with a rectangle from $\rr$.
\end{prop}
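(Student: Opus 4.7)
The plan is to lift the proposition from the two-sided picture already constructed for $\Psi:\sa\to R$. First I would define $\psi$ by forcing the relation $\psi\circ \pi = \piU \circ \Psi$: given $\alpha \in \sAA$, pick any two-sided extension $\tilde\alpha \in \sa$ with $\pi(\tilde\alpha) = \alpha$ and set $\psi(\alpha) := \piU(\Psi(\tilde\alpha))$. The essential check is that this is independent of the choice of extension. If $\tilde\alpha, \tilde\alpha' \in \sa$ both project to $\alpha$, then their forward coordinates agree, so by the same argument given in the paragraph preceding the proposition for the roof function, $x = \Psi(\tilde\alpha)$ and $y = \Psi(\tilde\alpha')$ lie in the same rectangle $R_{\alpha_0}$ with $\pp^j x$ and $\pp^j y$ in the same $R_{\alpha_j}$ for all $j \geq 0$; hyperbolicity then forces $y \in W^s_{R_{\alpha_0}}(x)$, so $\piU(x) = \piU(y)$.

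Next I would verify that $\psi$ is a bijection. Surjectivity is immediate since $\Psi$ is onto $R$ and every $u \in U$ equals $\piU(u)$. For injectivity, suppose $\psi(\alpha) = \psi(\alpha')$. Lifting to $\tilde\alpha, \tilde\alpha' \in \sa$ and setting $x = \Psi(\tilde\alpha)$, $y = \Psi(\tilde\alpha')$, the equality $\piU(x) = \piU(y)$ forces $x$ and $y$ to lie in the same stable fiber of the same rectangle $R_i$; by the Markov property (a), the forward iterates $\pp^j x$ and $\pp^j y$ then lie in a common rectangle for every $j \geq 0$, and by condition (d) they correspond to billiard trajectories following the same configuration forward, hence $\alpha = \alpha'$. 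The conjugacy $\psi \circ \sigma = \ts \circ \psi$ follows from $\Psi \circ \sigma = \pp \circ \Psi$ combined with the identity $\piU(\pp(\piU(x))) = \piU(\pp(x))$ for $x \in R$, which itself is a consequence of Markov property (a): $\pp$ sends the stable leaf through $x$ into the stable leaf through $\pp(x)$, and stable leaves are identified under $\piU$.

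For the roof function, I would set $r(\alpha) := \tau(\Psi(\tilde\alpha))$ for any lift $\tilde\alpha$ of $\alpha$; the text has already observed this is well defined because $\tau$ is constant on each set $W^s_{R_i}(\cdot)$. Then $r\circ \pi = \tau \circ \Psi$. To establish $r_n(\xi) = \tau_n(\psi(\xi))$, set $x = \Psi(\tilde\xi)$ and $u = \piU(x) = \psi(\xi)$. On one hand,
\begin{equation*}
r_n(\xi) \;=\; \sum_{j=0}^{n-1} r(\sigma^j \xi) \;=\; \sum_{j=0}^{n-1} \tau(\Psi(\sigma^j \tilde\xi)) \;=\; \sum_{j=0}^{n-1} \tau(\pp^j x).
\end{equation*}
On the other, since $u \in W^s_{R_i}(x)$, Markov property (a) propagates this: $\ts^j u \in W^s_{R_{\xi_j}}(\pp^j x)$ for every $j \geq 0$, and since $\tau$ is constant on local stable fibers, $\tau(\ts^j u) = \tau(\pp^j x)$. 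Summing gives $\tau_n(u) = \sum_{j=0}^{n-1} \tau(\pp^j x) = r_n(\xi)$, which is exactly the claimed flight time from $\psi(\xi)$ to its $n$th return to $R$.

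\textbf{Main obstacle.} The only real subtlety is the bookkeeping for how $\ts$, $\pp$, and $\piU$ interact, in particular confirming the identity $\piU \circ \pp = \piU \circ \pp \circ \piU$ from property (a) and that the forward itinerary determines the stable fiber uniquely (which uses condition (c) ensuring no boundary identifications cause trouble). Once these are in hand, well-definedness, bijectivity, conjugacy, and the $\tau_n$ identity all follow in a routine manner from the corresponding properties of $\Psi$.
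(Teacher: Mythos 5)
Your proposal is correct and follows essentially the same route as the paper. The paper constructs $\psi$ and $r$ in the paragraphs immediately preceding the proposition (defining $\psi$ by $\psi\circ\pi = \pi^{(U)}\circ\Psi$ and observing that $r(\xi)=\tau(\Psi(\xi))$ depends only on forward coordinates because two points with the same forward itinerary lie on a common stable fiber) and then states the proposition as a summary without further argument; you have simply supplied the routine verifications — well-definedness of $\psi$, bijectivity, the conjugacy $\psi\circ\sigma=\ts\circ\psi$, and the identity $r_n = \tau_n\circ\psi$ — which rest on the same observations about stable fibers and Markov property (a).
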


Let $B(\sAA)$ be the {\it space of  bounded functions} 
$g : \sAA \longrightarrow \C$ with its standard norm  
$\|g\|_0 = \sup_{\xi\in \sAA} |g(\xi)|$. Given a function $g \in B(\sAA)$, the  {\it Ruelle transfer operator } 
$\ll_g : B(\sAA) \longrightarrow B(\sAA)$ is defined by
$$(\ll_gh)(\xi) = \sum_{\sigma(\eta) = \xi} e^{g(\eta)} h(\eta)\;.$$

Let $\ff_\theta(\saa)$ denote the space of $d_\theta$-Lipschitz functions $g : \sAA \longrightarrow \C$ 
with the norm
$$\|| f\||_\theta = \|f\|_0 + \|f\|_\theta\;,$$
where 
$$\|f\|_\theta = \sup \left\{ \frac{|f(\xi) - f(\eta)|}{d_\theta(\xi,\eta)} : \xi, \eta\in \sAA\;, \; \xi \neq \eta\right\}\;.$$
If $g \in \ff_\theta(\sAA)$, then  $\ll_g$ preserves the space $\ff_\theta(\sAA)$.

\ms

\subsection{Coding using boundary components}

Here we assume that  $K$ is as in Sect. 1.
Denote by $A$ the $\ka_0\times \ka_0$ matrix with entries $A(i,j) = 1$ if $i \neq j$ and
$A(i,i)= 0$ for all $i$, and set
$$\sa = \{ (\ldots, \eta_{-m}, \ldots, \eta_{-1}, \eta_0,\eta_1,  \ldots, \eta_{m},  \ldots ) : 
1\leq \eta_j \leq \ka_0,\:\eta_j \in \N,\:\:\: \eta_{j} \neq \eta_{j+1}\:\: \mbox{\rm for all }\: j \in \Z\}\;,$$
$$\saa = \{  (\eta_0, \eta_1, \ldots, \eta_{m},  \ldots ) : 1\leq \eta_j \leq \ka_0,\:\eta_j \in \N,\:\:\:
\eta_{j} \neq \eta_{j+1}\:\: \mbox{\rm for all }\: j \geq 0\}\;,$$
$$\san = \{ (\ldots, \eta_{-m},  \ldots, \eta_{-1}, \eta_0) : 1\leq \eta_j \leq \ka_0,\:\eta_j \in \N,\:\:\:
\eta_{j-1} \neq \eta_j\:\: \mbox{\rm for all }\: j \leq 0\}\;.$$
The shift operators\footnote{We keep the same notation as for $\sA$ and $\sAA$; it will be clear 
from the context which of these we mean in each particular case.}
$\sigma: \: \sa \longrightarrow \sa$ and $\sigma: \saa \longrightarrow \saa$ 
are defined as before. 

We will now define two important functions $f$ and $g$ on $\sa$. For the second one in particular
we need some preliminary information.

A {\it phase function} on an open set $\uu$ in $\R^N$ is a smooth ($C^r$) function
$\varphi : \uu \longrightarrow \R$ such that $\| \nabla \varphi \| = 1$ everywhere in $\uu$.
For $x\in \uu$ the level surface
$$\cc_\varphi(x) = \{ y\in \uu: \varphi(y) = \varphi(x)\}$$
has a unit normal field $\pm \nabla \varphi(y)$. 

The phase function $\varphi$ defined on $\uu$ is said to satisfy the  {\it condition (${\mathcal P}$)} on $\Gamma_j$ if:

(i) the normal curvatures of $\cc_\varphi$ with respect to the normal field $-\nabla \varphi$ are
non-negative at every point of $\cc_\varphi$;

(ii) $\di \uu^+(\varphi) =  \{ y+t\nv(y) : t\geq 0, y\in \uu\} \supset \cup_{i\neq j} K_i$.

\medskip
A natural extension of $\varphi$ on $\uu^+(\varphi)$ is obtained by setting
$\varphi( y+t\nv(y)) = \varphi(y) + t$ for $ t\geq 0$ and $y\in \uu$.



For any $\delta > 0$ and $V \subset \Omega$ denote by $S^*_\delta(V)$
{\it the set of those} $(x,u)\in S^*(\Omega)$ such
that $x \in V$ and there exist $y\in \Gamma$ and $t \geq 0$ with 
$y+t u = x$, $y+ s u \in \R^N \setminus K$ for all
$s\in (0,t)$ and $\langle u, \nu (y) \rangle \geq \delta$. 

Notice that the condition (H) implies the existence of a constant $\delta_0 > 0$ depending only on the obstacle $K$
such that any $(x,u) \in S^*(\Omega)$ whose backward and forward billiard trajectories both have common points with 
$\Gamma$ belongs to $S^*_{\delta_0}(\Omega)$.

The following proposition is derived by using  some tools from \cite{kn:I1} (see Proposition 4 in \cite{kn:PS2} 
for details).

\begin{prop} There exists a constant $\epsilon_0 > 0$ such that 
for any $z_0 = (x_0,u_0)\in S_{\delta_0}^*(\Omega \cap B_0)$ whose
backward trajectory $\gamma_-(z_0)$ has an infinite number of 
reflection points $X_j = X_j(z_0)$ ($j\leq 0$)
and  $\eta\in \san$ is its itinerary, the following hold:

(a) There exists  a smooth ($C^r$) phase function $\psi = \psi_\eta$ 
satisfying the condition (${\mathcal P}$) on $\uu = B(x_0, \epsilon_0)\cap \Omega$  such that
$\psi(x_0) = 0$, $u_0 = \nabla \psi(x_0)$, and such that for any $x\in C_\psi(x_0)\cap \uu^+(\psi)$
the billiard trajectory $\gamma_-(x, \nabla\psi(x))$ has an itinerary $\eta$
and therefore 
$d(\phi_t(x, \nabla \psi(x)), \phi_t(z_0) ) \to 0$ as $t \to -\infty\;.$
That is, 
$$\wuloc (z_0) = \{ (x, \nabla\psi(x)) : x\in C_\psi(x_0)\cap \uu^+(\psi) \}$$
is the local unstable manifold of $z_0$. 


(b)  If $(y,v)\in S^*(\Omega \cap B_0)$ is such that $y\in C_\psi(x_0)$ and $\gamma_-(y,v)$
has the same itinerary $\eta$, then $v = \nabla \psi(y)$, i.e. $(y,v) \in \wuloc(z_0)$.

\end{prop}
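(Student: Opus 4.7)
The plan is to construct $\psi=\psi_\eta$ by an iterative propagation of phase functions from deep in the past towards $x_0$, using Sinai's reflection formula for wavefronts at strictly convex obstacles together with the exponential contraction supplied by Proposition 1. For each integer $m\geq 1$, I would fix a small piece of an affine hyperplane through $X_{-m}(z_0)$ perpendicular to the direction $(X_{-m}-X_{-m-1})/\|X_{-m}-X_{-m-1}\|$ and take the signed distance to it as an initial ``flat'' phase on a neighborhood of $X_{-m}$. Propagating this phase forward in time through the free flow and reflecting it at $\Gamma_{\eta_{-m+1}},\ldots,\Gamma_{\eta_0}$ (using the standard reflection rule for phase functions at a smooth boundary) yields a $C^r$ phase function $\psi_m$ on a neighborhood of $x_0$ in $\Omega$ with $\psi_m(x_0)=0$ and $\nabla\psi_m(x_0)=u_0$.

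The key step is to show that $\{\psi_m\}$ is Cauchy in $C^r$ on a fixed ball $\uu=B(x_0,\epsilon_0)\cap\Omega$. By Sinai's reflection formula, the shape operator of the outgoing wavefront has the form $\Phi' = R(\Phi) + 2\langle u,\nu\rangle\,L$, where $L$ encodes the second fundamental form of the obstacle and $R$ is the tangential reflection; since each $\Gamma_i$ has normal curvatures uniformly bounded below and $\langle u,\nu\rangle\geq\delta_0$ on admissible trajectories by (H), after one reflection the wavefronts are uniformly strictly convex. Combined with the contraction along the unstable direction from Proposition 1 (applied in reverse time), this yields $\|\psi_{m+1}-\psi_m\|_{C^r(\uu)}\leq C\alpha^m$ and produces the limit $\psi=\lim_m\psi_m$. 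Non-negativity of the normal curvatures of $\cc_\psi(x_0)$ with respect to $-\nabla\psi$ follows by iterating Sinai's formula, since the cone of convex wavefronts is preserved under reflection and strictly entered at the first reflection. The inclusion $\uu^+(\psi)\supset\cup_{i\neq j}K_i$ (with $j=\eta_0$) is where (H) enters directly: the forward trajectories emanating from points of $\cc_\psi(x_0)\cap\uu$ sweep out an open solid cone which, by the no-eclipse hypothesis, contains all obstacles $K_i$ with $i\neq j$, provided $\epsilon_0$ is chosen small relative to the constants $\delta_0$ and $d_0$ of Section 2.

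For (a), any $x\in\cc_\psi(x_0)\cap\uu^+(\psi)$ near $x_0$ gives a vector $(x,\nabla\psi(x))$ whose backward trajectory, by construction, tracks that of $z_0$ reflection by reflection; admissibility is an open condition in $\sA$, so the itinerary is $\eta$, and the one-sided unstable estimate in Proposition 1 gives $d(\phi_t(x,\nabla\psi(x)),\phi_t(z_0))\to 0$ as $t\to -\infty$. Hence the displayed set lies inside $\wuloc(z_0)$, while the reverse inclusion follows from the defining property of the local unstable manifold of $z_0$ together with the uniqueness in Proposition 1. For (b), if $(y,v)$ satisfies the hypotheses, applying the same backward construction to $(y,v)$ produces a phase $\tilde\psi$ whose zero-level hypersurface coincides locally with $\cc_\psi(x_0)$ (by Proposition 1 and the coincidence of itineraries); since $\tilde\psi$ and $\psi$ share this level hypersurface and both solve the eikonal equation $\|\nabla(\cdot)\|=1$ on $\uu$, their gradients must agree on $\cc_\psi(x_0)$, so $v=\nabla\psi(y)$.

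The hard part will be the $C^r$ convergence asserted above: the $C^0$ and $C^1$ pieces are essentially Proposition 1, but to control the higher jets one must propagate the full higher-order wavefront data through each reflection and show that the corresponding nonlinear Riccati-type recursion is exponentially contracting, with contraction constants independent of $m$ and of the tail of $\eta$. The ingredients for this are present in \cite{kn:I1} and Chapter 10 of \cite{kn:PS1}; the actual work consists in verifying that the constants in those estimates can be made uniform enough to choose a single radius $\epsilon_0$ serving all $z_0\in S^*_{\delta_0}(\Omega\cap B_0)$ and all admissible $\eta$ at once.
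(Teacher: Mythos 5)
Your plan -- construct $\psi_\eta$ as the $C^r$ limit of phase functions $\psi_m$ obtained by propagating and reflecting a flat phase placed deep in the past at $X_{-m}(z_0)$, with convergence forced by the expansion/contraction coming from strict convexity and condition (H) -- is precisely the Ikawa-style construction that the paper invokes: the paper gives no proof of its own for Proposition 3 but cites tools from \cite{kn:I1} and refers to Proposition 4 of \cite{kn:PS2}, where exactly this iterated-reflection argument is carried out. Your sketch of (a) and (b) likewise matches that route, so your proposal is essentially the same approach as the paper, modulo the details of the $C^r$ Cauchy estimate that you correctly flag as the real work and offload to the same references.
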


\bs

Notice that $\wuloc(z_0)$ coincides locally near $z_0$ with $W^u_{\epsilon}(z_0)$ defined above.

In what follows we will use the notation $C_{\eta}(z_0) = C_\psi(x_0)$. Denote by $G_{\eta}(z_0)$ the 
{\it Gauss curvature} of $C_\eta(z_0)$ at $x_0$.

Given $\xi \in \sa$, let
$$\ldots, P_{-2}(\xi), P_{-1}(\xi), P_0(\xi), P_1(\xi), P_2(\xi) , \ldots$$
be the successive reflection points of the unique billiard trajectory in the exterior of $K$ such that
$P_j(\xi) \in K_{\xi_j}$ for all $j \in \Z$. Set
$$f(\xi) = \| P_0(\xi) - P_1(\xi)\|\;,$$
and define the map
$$\Phi : \sa \longrightarrow \mtb = \mt \cap S^*_\mt(\partial K)$$
by $\Phi(\xi) = (P_0(\xi), (P_1(\xi) - P_0(\xi))/ \|P_1(\xi) - P_0(\xi)\|)$. Then $\Phi$ is a bijection
such that $\Phi\circ \sigma = B \circ \Phi$, where $B : \mtb \longrightarrow \mtb$ is the
{\it billiard ball map}.

Next, set $\xi_- = (\ldots, \xi_{-m}, \xi_{-m+1}, \ldots, \xi_{-1}, \xi_0) \in \san$ and choose an arbitrary point $x_0$
on the segment $[P_0(\xi), P_1(\xi)]$ such that $z_0 = (x_0, u_0) \in S^*_{\delta_0}(\Omega)$, where
$u_0 = (P_1(\xi) - P_0(\xi))/\|P_1(\xi) - P_0(\xi)\|$. Let $C_{\xi_-}(z_0) = C_\psi(x_0)$ for some phase function $\psi$.
Setting $t_- = \|x_0 - P_0(\xi)\|$ and $t_+ = \|x_0 - P_1(\xi)\|$, consider the surfaces
$$C^-_{\xi}(P_0(\xi)) = \{ x- t_- \nabla\psi(x) : x\in C_{\xi_-}(z_0) \}\quad ,\quad
C^+_{\xi}(P_1(\xi)) = \{ x+ t_+ \nabla\psi(x) : x\in C_{\xi_-}(z_0) \}\;.$$
Clearly $C^-_{\xi}(P_0(\xi))$ is the surface passing through $P_0(\xi)$ obtained by  shifting $C_{\xi_-}(z_0)$ in free space $\R^N$,
$t_-$ units backwards along its normal field, while $C^+_{\xi}(P_1(\xi))$ is the surface passing through $P_1(\xi)$
obtained by  shifting $C_{\xi_-}(z_0)$ in free space $\R^N$, $t_+$ units forwards  along its normal field. Let
$G^-_{\xi}(P_0(\xi))$ and $G^+_{\xi}(P_1(\xi))$ be the {\it Gauss curvatures} of $C^-_{\xi}(P_0(\xi))$ at $P_0(\xi)$ and 
that of $C^+_{\xi}(P_1(\xi))$ at $P_1(\xi)$, respectively. Set
$$g(\xi) = \frac{1}{N-1} \, \ln \frac{G^+_{\xi}(P_1(\xi))}{G^-_{\xi}(P_0(\xi))}\;.$$
This defines a function $g : \sa \longrightarrow \R$.

Choosing appropriately $\theta \in (0,1)$, we can define as in Subsection 2.1 above the space ${\mathcal F}_{\theta}(\sa)$ and we get
$f, g  \in {\mathcal F}_{\theta}(\sa)$  (see e.g. \cite{kn:I1}).
By Sinai's Lemma (see e.g. \cite{kn:PP}), there exist functions $\tf, \tg \in \ff_{\sqrt{\theta}}(\sa)$ depending on future coordinates only and $\chi_f, \chi_g \in \ff_\theta(\sa)$ such that
$$f(\xi) = \tf(\xi) + \chi_f(\xi) - \chi_f(\sigma \xi) \quad , \quad 
g(\xi) = \tg(\xi) + \chi_g(\xi) - \chi_g(\sigma \xi)$$
for all $\xi \in \sa$.
As in  the proof of Sinai's Lemma, for any $k = 1, \ldots, \ka_0$ choose and fix an arbitrary sequence 
$\eta^{(k)} = (\ldots, \eta^{(k)}_{-m}, \ldots, \eta^{(k)}_{-1}, \eta^{(k)}_0)\in \Sigma_a^-$ 
with $\eta^{(k)}_0 = k$. Then for any $\xi \in \sa$ (or $\xi \in \saa$) set
$$e(\xi) = (\ldots, \eta^{(\xi_0)}_{-m}, \ldots, \eta^{(\xi_0)}_{-1},
\eta^{(\xi_0)}_0= \xi_0, \xi_1, \ldots, \xi_m, \ldots)\in \sa \;.$$ 
Then we have
$$\chi_f(\xi) = \sum_{n=0}^\infty [ f(\sigma^n (\xi)) - f(\sigma^n e(\xi))]\;.$$
A similar formula holds for $\chi_g$.

As in section 2.1, given any function $V \in B(\saa)$, the  {\it Ruelle transfer operator } \\
$L_V : B(\saa) \longrightarrow B(\saa)$ is defined by
$$(L_V W)(\xi) = \sum_{\sigma(\eta) = \xi} e^{V(\eta)} W(\eta)\;.$$

\ms

\subsection{Another coding related to the Markov family}
\renewcommand{\theequation}{\arabic{section}.\arabic{equation}}
\setcounter{equation}{0}

Here we define another coding which uses the symbolic model $\sAA$. We will then define representatives
$\hf$ and $\hg$ of  the functions $f$ and $g$ on $\sAA$ and consider the corresponding Ruelle operators
$\ll_{-s \hf + \hg}$.

Let $V_0$ be the set of those $(p,u) \in S^*(\Omega)$ such that 
$p = q + t\, u$ and $(p,u) = \phi_t(q,u)$ for some 
$(q,u) \in S_{\dk}^*(\Omega)$ with $\la u , \nu(q) \ra  > 0$ and 
some $t \geq 0$. Clearly $V_0$ is an open subset of $S^*(\Omega)$ containing $\mt$.
Setting $\omega(p,u) = (q,u)$, we get a smooth map 
$\omega : V_0 \longrightarrow S_{\dk}^*(\Omega)$.

Consider the bijection $\ss  = \Phi^{-1}\circ \omega \circ \Psi : \sA \longrightarrow \sa$.
It induces a bijection $\ss  : \sAA \longrightarrow \saa$.
Moreover $\ss \circ \sigma = \sigma \circ \ss$.

Define the functions $f', g' : \sA \longrightarrow \R$ by $f'(\iu) = f(\ss(\iu))$ and
$g'(\iu) = g(\ss(\iu))$. 

Next, repeating a part of the previous subsection, for any $i = 1, \ldots, k$ choose
$$\hju^{(i)} = (\ldots, j_{-m}^{(i)}, \ldots,  j_{-1}^{(i)})$$ 
such that
$(\hju^{(i)}, i) \in \sA^-$. It is convenient to make this choice in such a way that $\hju^{(i)}$ {\bf corresponds to the
local unstable manifold} $U_i \subset \mt \cap W^u_\ep(z_i)$ (see the beginning of Subsection 2.1), i.e.
the backward itinerary of every $z \in U_i$ coincides with $\hju^{(i)}$.

Now for any $\iu = (i_0, i_1, \ldots) \in \sAA$ (or $\iu \in \sA$) set
$$\he(\iu) = (\hju^{(i_0)}; i_0,i_1, \ldots) \in \sA\;.$$
According to the choice of $\hju^{(i_0)}$, we have $\Psi(\he(\iu)) = \psi(\iu) \in U_{i_0}$.
(Notice that without the above special choice we would only have that 
$\Psi(\he(\iu))$ and  $\psi(\iu) \in U_{i_0}$ lie on the same stable leaf in $R_{i_0}$.)

Next, define
$$\hchi_f (\iu) = \sum_{n=0}^\infty \left[ f'(\sigma^n(\iu)) - f'(\sigma^n \, \he(\iu))\right]\quad , \quad \iu \in \sA\;,$$
and
$$\hchi_g (\iu) = \sum_{n=0}^\infty \left[ g'(\sigma^n(\iu)) - g'(\sigma^n \, \he(\iu)) \right] \quad , \quad \iu \in \sA\;.$$
As before, the functions $\hf, \hg : \sA \longrightarrow \R$ given by
$$\hf(\iu) = f'(\iu) - \hchi_f(\iu) + \hchi_f(\sigma \,\iu)\quad ,\quad
\hg(\iu) = g'(\iu) - \hchi_g(\iu) + \hchi_g(\sigma \,\iu)$$
depend on future coordinates only, so they can be regarded as functions on $\sAA$.


\ms

\section{Relationship between Ruelle operators}
\subsection{Relationship between ${\mathcal L}_{-sr + \hg}$ and ${\mathcal L}_{-s\hf +\hg}$}
\def\ll{{\mathcal L}}
We will now describe a natural relationship between the operators 
$\ll_V : B(\sAA) \longrightarrow B(\sAA)$  and $L_v : B(\saa) \longrightarrow B(\saa)$, where
$V = v\circ \ss$.  Let $\rr = \{ R_i\}_{i=1}^k$ be a Markov family as in Subsection 2.1.  We define a map
$$\Gamma : B (\sa) \longrightarrow B(\sA)\;$$ 
by
\begin{equation} \label{eq:3.1}
\Gamma(v) = v\circ \Phi^{-1}\circ \omega \circ \Psi  = v \circ \ss\quad, \quad v \in B(\sa)\;.
\end{equation}
Since by property (d) of the Markov family, $\omega : R \longrightarrow \mtb$ is a bijection, it
follows that $\Gamma$ is a bijection and $\Gamma^{-1}(V) = V\circ \Psi^{-1}\circ \omega^{-1}\circ \Phi$.

Moreover $\Gamma$ induces a bijection $\Gamma : B (\saa) \longrightarrow B(\sAA)\;$.
Indeed, assume that $v \in B(\sa)$ depends on future coordinates only. Then $v\circ \Phi^{-1}$
is constant on local stable manifolds in $S^*_\mt(\Omega)$. Hence $v\circ \Phi^{-1} \circ \omega$
is constant on local stable manifolds on $R$, and therefore 
$\Gamma(v) = v\circ \Phi^{-1}\circ \omega \circ \Psi$ depends on future coordinates only.

Next, let $v, w\in B(\saa)$ and let $V = \Gamma(v)$, $W = \Gamma(w)$.
Given $\iu,\ju\in \sAA$ with $\sigma (\ju) = \iu$, setting $\xi = \ss(\iu)$ and $\eta = \ss (\ju)$, we
have $\sigma (\eta) = \xi$. Thus,
\begin{eqnarray*}
\ll_W V (\iu) 
& = & \sum_{\sigma(\ju) = \iu} e^{W(\ju)}\, V(\ju) = \sum_{\sigma(\ju) = \iu} e^{w(\ss (\ju))}\, v(\ss (\ju))\\
& = &  \sum_{\sigma(\eta) = \xi} e^{w(\eta)}\, v(\eta) = L_w v (\xi)
\end{eqnarray*}
for all $\iu \in \sAA$. This shows that
\be \label{eq:3.2}
(L_w v) \circ \ss = \ll_{\Gamma(w)} \Gamma(v) \;.
\ee

Notice that the functions $r : \sAA \longrightarrow [0,\infty)$ and $\tf : \saa \longrightarrow [0,\infty)$ do not correspond
to each other via $\ss$, and neither do $r$ and $\hf : \sAA \longrightarrow [0,\infty)$.
To compensate the difference between the latter two, define $\hla : \sA \longrightarrow [0,\infty)$ by
$\hla(\iu) = t > 0$, where for the point $x  = \Psi(\iu) \in R$ we have 
$\phi_{-t}(x) = \omega(x)\in S^*_{\dk}(\Omega)$.
With the same notation, define $\lambda (x) = t = \hla(\iu)$. This defines a function
$\lambda : R \longrightarrow [0,\infty)$  so that $\lambda \circ \Psi = \hla$.

Before continuing, notice that
\be \label{eq:3.3}
\hla(\he(\iu)) = \hla(\iu) - \hchi_f(\iu) \quad , \quad \iu \in \sA\;.
\ee
Indeed, given $\iu = (\ldots ; i_0,i_1, \ldots) \in \sA$, let $x = \Psi(\iu) \in R_{i_0}$ and $y  = \Psi(\he(\iu)) \in R_{i_0}$. 
Since, $\iu$ and $\he(\iu)$ have the same forward coordinates, it follows that $x$ and $y$ lie on the same local stable 
manifold. Thus, $\tau^n(x) = \tau^n(y)$ and so $r_n(\iu) = r_n(\he(\iu))$ for all integers $n \geq 1$.
Moreover, the $m$th reflection points of the billiard trajectories determined by $x$ and $y$ are  
$\Con \, \theta^m$-close for some global constants $\Con > 0$ and $\theta\in (0,1)$.  Thus,
$|\hla(\sigma^m (\iu))- \hla(\sigma^m(\he(\iu)) | \leq \Con \, \theta^m$, too (possibly with a different global constant $\Con > 0$).

Set $t = \hla(\iu)$ and $t' = \hla(\he(\iu))$. Given an integer $m \geq 1$, consider
$$A_m = \sum_{n = 0}^m \left[ f'(\sigma^n(\iu)) - f'(\sigma^n \, \he(\iu))\right]\;.$$
Then $\hchi_f(\iu) = \lim_{m\to \infty} A_m$.
Moreover,
\begin{eqnarray*}
A_m
& = & [r_m(\iu) + t - \hla(\sigma^m (\iu))] - [r_m(\he(\iu)) + t' - \hla(\sigma^m(\he(\iu))]\\
& = & (t-t') - [\hla(\sigma^m (\iu)) - \hla(\sigma^m(\he(\iu))] = \hla(\iu) - \hla(\he(\iu)) + O(\theta^m)\;.
\end{eqnarray*}
and letting $m \to \infty$, we obtain (\ref{eq:3.3}).

Next, for any $\iu \in \sA$ and any $m\geq 1$ we have
\begin{eqnarray*}
f'_m(\iu) 
& = & f'(\iu) + f'(\sigma\, \iu) + \ldots + f'(\sigma^{m-1}\iu)\\
& = & [\hf(\iu) + \hchi_f(\iu) - \hchi_f(\sigma\, \iu)] +  [\hf(\sigma\, \iu) + \hchi_f(\sigma\, \iu) - \hchi_f(\sigma^2\, \iu)] 
+ \ldots\\
&    &\quad  +  [\hf(\sigma^{m-1} \iu) + \hchi_f(\sigma^{m-1}\iu) - \hchi_f(\sigma^m\, \iu)] \\
& = & \hf_m(\iu) + \hchi_f (\iu) - \hchi_f(\sigma^m\iu)\;.
\end{eqnarray*}
Since $r_m(\iu) = f'_m(\iu) + \hla(\sigma^m\iu) - \hla(\iu)$, combining the above and (\ref{eq:3.3}), it follows that
$$r_m(\iu) =  \hf_m(\iu) + \hchi_f (\iu) - \hchi_f(\sigma^m\iu) +  \hla(\sigma^m\iu) - \hla(\iu)
= \hf_m(\iu) + \hla(\he (\sigma^m\iu)) - \hla(\he(\iu))\;.$$
Thus,
\be \label{eq:3.4}
r_m(\iu) = \hf_m(\iu) + \hla(\he (\sigma^m\iu)) - \hla(\he(\iu))\quad , \quad \iu\in \sA\:\:, \:\: m \geq 1\;.
\ee
Notice that $\hf_m(\iu) = \hf_m(\he(\iu))$, since $\hf$ depends on future coordinates only.


Now we will find a relationship between the powers of the operators  
$\ll_{-s\,r + \hg}$ and $\ll_{-s\, \hf + \hg}$. Given $s\in \C$, consider the function $h_s : U \longrightarrow \R$ defined by
$$h_s(x) = e^{-s\, \lambda (x)}\;.$$
It gives rise to a function
$\hh_s : \sAA \longrightarrow \R$ defined by
\be \label{eq:3.5}
\hh_s(\iu) = h_s(\psi(\iu)) = h_s(\Psi(\he(\iu))) = e^{-s\, \hla (\he(\iu))} \quad , \quad \iu \in \sA\;.
\ee
(See the remark after the definition of $\he(\iu)$ in Sect. 2.3 above.)

It now follows from (\ref{eq:3.4}) and (\ref{eq:3.5}) that for any function  $V : \sAA \longrightarrow \C$, any $s\in \C$ and any integer $n \geq 1$ we have
\beqn*
\ll_{-s\, r+\hg}^{n+1} \left(\hh_s\cdot V\right)(\iu)
& = & \sum_{\sigma^{n+1}\ku = \iu} e^{-s\, r_{n+1}(\ku) + \hg_{n+1}(\ku)}\, e^{-s\, \hla (\he(\ku))}\, V(\ku)\\
& = & \sum_{\sigma^{n+1}\ku = \iu} e^{-s\, [\hf_{n+1}(\ku) + \hla(\he (\sigma^{n+1}\ku)) - \hla(\he(\ku))] + \hg_{n+1}(\ku) -s\, \hla (\he(\ku))}\, V(\ku)\\
& = & \sum_{\sigma^{n+1}\ku = \iu} e^{-s\, \hf_{n+1}(\ku) -s\, \hla(\he (\sigma^{n+1}\ku)) + \hg_{n+1}(\ku)}\, V(\ku)\\
& = &  e^{-s\, \hla(\he (\iu))}\; \sum_{\sigma^{n+1}\ku = \iu} e^{-s\, \hf_{n+1}(\ku)  + \hg_{n+1}(\ku)}\, V(\ku) 
= \hh_s(\iu)\, \cdot  \left(\ll_{-s\, \hf+\hg}^{n+1}V\right)(\iu)\;.
\eeqn*
Thus,
\be \label{eq:3.6}
\frac{1}{\hh_s}  \cdot \ll_{-s\, r+\hg}^{n} \left(\hh_s\cdot V\right) = \ll_{-s\, \hf+\hg}^{n }  \, V\;.
\ee

\ms

\subsection{Relationship between $\ll_{-s \hf + \hg}$ and $L_{-s\tf + \tg}$}

Here we will use arguments similar to these in Subsect. 3.1 to find a
relationship between the operators $\ll_{-s \hf + \hg}^n$ and $L_{-s \tf + \tg}^n$.

For any $p = 1, 2, \ldots,\kappa_0$ fix an arbitrary point $y_p \in \mt \cap S^*_{\delta_0}(\Omega)$ such that
$\eta^{(p)} \in \sa^-$ {\bf corresponds to the
unstable manifold} $W^u(y_p)$, i.e. the backward itinerary of every $z \in W^u(y_p)\cap V_0$ coincides with $\eta^{(p)}$.
Define $\kappa: \sAA \longrightarrow \R$ by 
$$\kappa(\iu) = [\Psi(\he(\iu)), y_{\xi_0}] \quad , \quad \xi = \ss(\iu) \in \saa\;.$$

We will now prove  that
\be \label{eq:3.7}
\hchi_f(\iu) - \chi_f(\ss (\iu)) =  \lambda(\kappa(\iu))- \hla(\he(\iu)) \quad , \quad \iu \in \sA\;.
\ee
It is important that the right-hand-side of (\ref{eq:3.7}) depends only on the future coordinates of $\iu$.
Indeed, given $\iu = (\ldots; i_0,i_1, \ldots) \in \sA$, set $x = \Psi(\he(\iu)) \in U_{i_0}$, $\xi = \ss (\iu)$,
$\xi_0 = p$ and $y  = \kappa(\iu) = [x,y_p]$.
Notice that $y \in W^s(x)$, and so the forward billiard trajectories of $y$ and $x$ converge. On the other hand,
$\phi_\tau (y) \in W^u(y_p)$ for some $\tau\in \R$ (in fact, $\tau = \Delta (x,y_p)$; see section 2 for the definition of $\Delta$), so the billiard trajectory determined by $y$ has
backward itinerary $\eta^{(p)}\in \sa^-$ and forward itinerary $(\xi_0,\xi_1,\ldots)\in \saa$, i.e. this
is the trajectory determined by $e(\xi)$. 

Since $y \in W^s(x)$, the $j$th reflection points $p_j$ and $q_j$ ($j\geq 0$)
of the billiard trajectories determined by $x$ and $y$,
respectively, are  $\Con \, \theta^j$-close for some global constants $\Con > 0$ and $\theta\in (0,1)$. 

Set $t = \hla(\he(\iu)) = \lambda(x)$ and $t' = \lambda(y) = \lambda(\kappa(\iu))$.
Given an integer $m \geq 1$, consider
$$A_m = \sum_{n = 0}^{m-1} \left[ f'(\sigma^n(\iu)) - f'(\sigma^n \, \he(\iu))\right] 
- \sum_{n = 0}^{m-1} \left[ f(\sigma^n(\xi)) - f(\sigma^n \, e(\xi))\right]\;.$$
Then $\hchi_f(\iu) - \chi_f(\ss (\iu))  = \lim_{m\to \infty} A_m$. Since $f'(\sigma^n(\iu))  = f(\sigma^n(\xi))$,
we have
\begin{eqnarray*}
A_m
= - \sum_{n = 0}^{m-1}f'(\sigma^n \, \he(\iu))
+ \sum_{n = 0}^{m-1} f(\sigma^n \, e(\xi))\;.
\end{eqnarray*}
The first sum in this expression is the length of the billiard trajectory determined by $x$ from $p_0$ till 
$p_m$, while the second is the length of the billiard trajectory determined by $y$ from $q_0$ till 
$q_m$. Since $\|q_m - p_m\| \leq \Con\, \theta^m$ and $y \in W^s(x)$, it now follows that
$A_m = -t+t' + O(\theta^m)$, and letting $m \to \infty$ proves (\ref{eq:3.7}).

In a similar way, using the definition of the function $g$, one derives that
\be \label{eq:3.8}
\hchi_g(\iu) - \chi_g(\xi) = \frac{1}{N-1}\, \ln G^-_{\he(\iu)}(P_0(\he(\iu)))
- \frac{1}{N-1}\, \ln G^-_{e(\xi)}(P_0(e(\xi)))
\quad , \quad \iu \in \sA\;.
\ee
Notice that $G^-_{\he(\iu)}(P_0(\he(\iu)))$ is the Gauss curvature of a shift of $U_{i_0}$ backwards along the
corresponding billiard trajectory at $P_0(\he(\iu))$ (so this is uniquely determined by the forward coordinates of $\iu$), while
$G^-_{e(\xi)}(P_0(e(\xi)))$ is the Gauss curvature of the shift of $W^u(y_{\xi_0})$ at
$P_0(e(\xi))$ (so again this is uniquely determined by the forward coordinates of $\iu$).

Given $\iu\in \sA$, set $\xi = \ss(\iu)$. Then (\ref{eq:3.7}) implies
\begin{eqnarray} \label{eq:3.9}
\hf_m(\iu) - \tf_m(\xi) 
& = & [f_m'(\iu) -\hchi_f (\iu) + \hchi_f(\sigma^m\iu)] - [f_m(\xi) - \chi_f (\xi) + \chi_f(\sigma^m\xi)]\nonumber\\
& = &  -[\hchi_f (\iu) - \hchi_f(\sigma^m\iu)] + [\chi_f (\xi) - \chi_f(\sigma^m\xi)]\;.
\end{eqnarray}
Similarly,
\begin{eqnarray} \label{eq:3.10}
\hg_m(\iu) - \tg_m(\xi) = -[\hchi_g (\iu) - \hchi_g(\sigma^m\iu)] + [\chi_g (\xi) - \chi_g(\sigma^m\xi)]\;.
\end{eqnarray}

Given $s\in \C$, consider the functions $d_s : U \longrightarrow \R$ and 
$\hd_s : \sAA \longrightarrow \R$ defined by
\be \label{eq:3.11}
d_s(\Psi(\iu)) = \hd_s(\iu) = e^{s\,[  \lambda(\kappa(\iu)) - \hla(\he(\iu)) ] 
- \frac{1}{N-1}\, \ln \frac{G^-_{\he(\iu)}(P_0(\he(\iu)))}{G^-_{e(\xi)}(P_0(e(\xi)))}} \quad , \quad \iu \in \sAA\;, \: \xi  = \ss(\iu)\;.
\ee
Notice that by (\ref{eq:3.7}) and (\ref{eq:3.8}), 
\be \label{eq:3.12}
\hd_s(\iu) = e^{s [\hchi_f(\iu) - \chi_f(\xi)]
- [\hchi_g(\iu) - \chi_g(\xi)] } \quad , \quad \xi = \ss(\iu).
\ee

Now for any function  $V : \sAA \longrightarrow \C$, any $s\in \C$ and
any integer $n \geq 1$, setting $\xi = \ss(\iu)$ and $\zeta = \ss(\ku)$ and using (\ref{eq:3.9}) and (\ref{eq:3.10}),  we get
\beqn*
L_{-s\, \tf+\tg}^{n} \left((\hd_s\circ \ss^{-1})\cdot (V \circ \ss^{-1})\right)(\xi)
& = & \sum_{\sigma^{n}\zeta = \xi} e^{-s\, \tf_{n}(\zeta) + \tg_{n}(\zeta)}\, 
\hd_s(\ku)\, V(\ku)\\
& = & \sum_{\sigma^{n}\ku = \iu} e^{-s\, \hf_{n}(\ku)  
- s [\hchi_f (\ku) - \hchi_f(\sigma^{n}\ku)] + s [\chi_f (\zeta) - \chi_f(\sigma^{n}\zeta)] }\\
&    & \:\:\: \times
e^{\hg_{n}(\ku) + [\hchi_g (\ku) - \hchi_g(\sigma^{n}\ku)] - [\chi_g (\zeta) - \chi_g(\sigma^{n}\zeta)]}\\
&    & \:\:\: \times e^{s [\hchi_f(\ku) - \chi_f(\zeta)] - [\hchi_g(\ku) - \chi_g(\zeta)] } \, V(\ku)\\
& = & \sum_{\sigma^{n}\ku = \iu} e^{-s\, \hf_{n}(\ku) + \hg_{n}(\ku)}
\times  e^{ s [\hchi_f(\iu) - \chi_f(\xi)] - [\hchi_g(\iu) - \chi_g(\xi)]}\, V (\ku)\\
& = & \hd_s(\iu) \cdot \left( \ll^{n}_{-s \hf + \hg} V \right) (\iu)\;.
\eeqn*

Combining the latter with (\ref{eq:3.6}), yields
$$L_{-s\, \tf+\tg}^{n} \left((\hd_s\circ \ss^{-1})\cdot (V\circ \ss^{-1})\right)(\xi)
=  \frac{\hd_s(\iu)}{\hh_s(\iu)}  \cdot  \ll_{-s\, r+\hg}^{n} (\hh_s\cdot V)  (\iu) \;.$$
Setting $u = (\hd_s\circ \ss^{-1})\cdot (V\circ \ss^{-1}) \in C(\saa)$ in the above yields the following
\begin{prop} For any $u \in C(\saa)$ we have
\be \label{eq:3.13}
\left(L_{-s\, \tf+\tg}^{n} u\right) (\ss(\iu))  = \frac{\hd_s(\iu)}{\hh_s(\iu)}  \cdot  
\ll_{-s\, r+\hg}^{n} \left(\frac{\hh_s}{\hd_s} \cdot (u\circ \ss) \right)  (\iu) \quad, \quad \iu \in \sAA\;.
\ee
In particular, the eigenvalues of $\ll_{-sr + \hg}$ and $L_{-s\tf + \tg}$ coincide with their multiplicities.
\end{prop}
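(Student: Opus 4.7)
The plan is simply to assemble the two intertwining identities already set up in this section. The calculation carried out immediately before the statement of the proposition, which uses (\ref{eq:3.9}), (\ref{eq:3.10}) together with the definition (\ref{eq:3.12}) of $\hd_s$, already establishes
$$L_{-s\tf+\tg}^{n}\bigl((\hd_s\circ \ss^{-1})\cdot (V\circ\ss^{-1})\bigr)(\xi)
= \hd_s(\iu)\cdot \bigl(\ll_{-s\hf+\hg}^{n} V\bigr)(\iu), \qquad \xi = \ss(\iu),$$
for every $V : \sAA \longrightarrow \C$. To obtain (\ref{eq:3.13}) I would combine this with (\ref{eq:3.6}), rewritten as $\ll_{-s\hf+\hg}^{n} V = (1/\hh_s)\cdot \ll_{-sr+\hg}^{n}(\hh_s \cdot V)$, by setting $u = (\hd_s\circ\ss^{-1})\cdot(V\circ\ss^{-1}) \in C(\saa)$ (so that $V = (u\circ\ss)/\hd_s$) and performing a one-line substitution. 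This is the only routine part of the argument.

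For the spectral assertion, I would introduce the pullback-and-multiplication operator
$$T : C(\saa) \longrightarrow C(\sAA), \qquad (Tu)(\iu) = \frac{\hh_s(\iu)}{\hd_s(\iu)}\, u(\ss(\iu)),$$
which is invertible since $\hh_s$ and $\hd_s$ are nowhere vanishing (they are pointwise complex exponentials) and $\ss$ is a bijection; its inverse sends $W$ to $(\hd_s/\hh_s)\cdot (W\circ\ss^{-1})$. Then (\ref{eq:3.13}) can be read as $L_{-s\tf+\tg}^{n} = T^{-1}\, \ll_{-sr+\hg}^{n}\, T$, exhibiting $L_{-s\tf+\tg}$ and $\ll_{-sr+\hg}$ as similar operators; hence their spectra coincide with multiplicities.

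The main delicate point, and what I expect to be the principal obstacle, is to make the similarity statement meaningful at the level of the H\"older spaces $\ff_\theta$ on which the Ruelle operators have a useful quasi-compact spectral theory, rather than merely on $C$ where spectrum-with-multiplicities is not well behaved. One must check that $T$ restricts to a topological isomorphism between appropriate H\"older classes: concretely, that $\ss$ is bi-Lipschitz with respect to suitably related metrics $d_\theta$ on $\saa$ and $d_{\theta'}$ on $\sAA$ (possibly with different $\theta$'s), and that the weight $\hh_s/\hd_s$ lies in the corresponding H\"older class. Both facts should follow from the Bowen--Sinai exponential shadowing estimates (Proposition 1) together with the telescoping representations of $\hchi_f$, $\hchi_g$ and of $\hla$, $\lambda\circ\kappa$ used in the definitions (\ref{eq:3.5}), (\ref{eq:3.11}) of $\hh_s$ and $\hd_s$, after possibly passing to a power of $\theta$; but bookkeeping the precise H\"older exponents is the tedious step.
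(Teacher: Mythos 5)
Your derivation of (\ref{eq:3.13}) is exactly the paper's argument: the display preceding the proposition (obtained from (\ref{eq:3.9}), (\ref{eq:3.10}), (\ref{eq:3.12})) is combined with (\ref{eq:3.6}) and the substitution $u = (\hd_s\circ\ss^{-1})\cdot(V\circ\ss^{-1})$ is performed; nothing more is said in the paper. For the ``in particular'' clause the paper gives no written argument, and your reading of (\ref{eq:3.13}) as a conjugacy $L_{-s\tf+\tg}^{n} = T^{-1}\,\ll_{-sr+\hg}^{n}\,T$ with $T$ the multiplication-by-$\hh_s/\hd_s$ pullback is the natural way to extract it; note that the introduction also mentions an independent route via the dynamical zeta function and thermodynamic formalism as in \cite{kn:PP}, which bypasses (\ref{eq:3.13}) entirely.

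The caution you raise --- that similarity on $C$ alone does not control eigenvalues with multiplicity, so one must verify that $T$ is an isomorphism between the relevant H\"older/Lipschitz classes --- is legitimate and is not addressed inside Proposition 4's proof. The paper does supply the ingredients elsewhere: $\Psi$ and $\Psi^{-1}$ are made Lipschitz by the choice of $\theta$ and $\rho$ in Subsection 2.1, $\ss = \Phi^{-1}\circ\omega\circ\Psi$ is then bi-Lipschitz for appropriately matched metrics, and Lemma 1 shows $h_s$ and $d_s$ are Lipschitz on $U$ (under the assumed Lipschitzness of the laminations), which makes the weight $\hh_s/\hd_s$ Lipschitz since it is bounded away from $0$. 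So your proposed bookkeeping is the right supplement, and the paper effectively defers it to Lemma 1 and the preceding coding discussion, plus the zeta-function alternative, rather than spelling it out here.
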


\subsection{Dolgopyat type estimates}
To obtain Dolgopyat type estimates for the left-hand side of (\ref{eq:3.13}) we can use the Dolgopyat type estimates for 
$\ll_{-sr + \hg}$,  provided $u \circ \ss$ is determined by a Lipschitz function on $R$ (with respect to the distance on $R$ determined 
by the standard metric in $S^*(\Omega)$). 
To do so we also need to show that the functions $d_s$ and $h_s$ given by  (\ref{eq:3.11}) and (\ref{eq:3.5}) are Lipschitz on $U$. 



\begin{lem}
Assume that  the strong stable and the strong unstable laminations $\{ W^s_{\ep}(x)\}_{x\in \mt}$ and
$\{ W^u_{\ep}(x)\}_{x\in \mt}$ are Lipschitz in $x \in \mt$. 
Then the  functions $d_s$ and $h_s$ are Lipschitz on $U$.
\end{lem}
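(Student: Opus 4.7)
The plan is to check that each ingredient in the formulas for $h_s$ and $d_s$ is Lipschitz on $U$, invoking the Lipschitz hypothesis on the stable and unstable laminations where needed. For $h_s(x) = e^{-s\lambda(x)}$, it suffices to prove $\lambda$ is Lipschitz. The map $\omega : V_0 \to S^*_{\partial K}(\Omega)$ of Subsection 2.3 sends $x \in V_0$ to the previous reflection point on $\partial K$ by flowing backwards through free space; since the billiard flow is smooth on the open set $V_0\supset \mt$, the map $\omega$ is smooth and so is the backward flow-time $\lambda(x)$. Therefore $h_s$ is smooth, in particular Lipschitz, on $U$.

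For $d_s$, write $\iu = \psi^{-1}(x)$, $\xi = \ss(\iu)$, $p = \xi_0$, and note that $p$ is constant on each $U_i$: it equals the index $p_i$ of the obstacle $\Gamma_{p_i}$ carrying the previous reflection of every trajectory originating in $R_i$. By (\ref{eq:3.11}), $d_s$ is the product of an exponential of $s[\lambda(\kappa(\iu)) - \hla(\he(\iu))]$ and the ratio $G^-_{\he(\iu)}(P_0(\he(\iu)))/G^-_{e(\xi)}(P_0(e(\xi)))$. The term $\hla(\he(\iu)) = \lambda(\Psi(\he(\iu))) = \lambda(x)$ is smooth, while $\lambda(\kappa(\iu)) = \lambda([x,y_p])$ is the composition of the stable holonomy $x\mapsto[x,y_p]$ (Lipschitz on $U_i$ by the hypothesis on the stable lamination) with the smooth function $\lambda$; so the exponent is Lipschitz on each $U_i$. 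For the Gauss curvatures, within each $U_i$ all points share the common local unstable manifold $W^u_\ep(z_i)\cap \mt$, so the unstable wavefront underlying $G^-_{\he(\iu)}$ is fixed on $U_i$ while the evaluation point $P_0(\he(\iu)) = \omega(x)$ varies smoothly with $x$, giving a smooth numerator. For the denominator, the key observation is that $P_0(e(\xi)) = \omega([x,y_p])$: indeed, the point $[x,y_p] = \kappa(\iu)$ lies on $W^u(y_p)$ (up to a small time shift) and on the stable leaf of $x$, so it has backward itinerary $\eta^{(p)}$ and the same forward itinerary as $x$, which is exactly the trajectory coded by $e(\xi)$; applying $\omega$ projects it to the previous reflection $P_0(e(\xi))$. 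Hence $P_0(e(\xi))$ is Lipschitz in $x$ by the same stable holonomy argument, and the Gauss curvature of the fixed reference wavefront is a smooth function of its evaluation point, so the denominator is Lipschitz on each $U_i$.

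Combining these observations, $d_s$ is Lipschitz on each $U_i$. Since $U = \bigcup_{i=1}^k U_i$ is a finite disjoint union of compact sets separated by a positive distance (they lie in distinct Markov rectangles with pairwise disjoint interiors and $\partial_\mt R_i = \emptyset$) and $d_s$ is bounded on $U$, the local Lipschitz regularity on each $U_i$ upgrades to global Lipschitz regularity on $U$. The main obstacle is realising $P_0(e(\xi))$ as the image of $x$ under a genuine holonomy rather than accepting the merely H\"older dependence forced by Proposition 1 alone: it is precisely the Lipschitz regularity of the stable lamination that promotes the operation $x \mapsto [x,y_p]$ to a Lipschitz map and thereby handles both the argument for $\lambda(\kappa(\iu))$ and the identification of $P_0(e(\xi))$. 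The hypothesis on the unstable lamination enters symmetrically when the companion arguments involving unstable holonomies are needed in the applications of (\ref{eq:3.14}).
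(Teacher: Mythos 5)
Your proof takes essentially the same route as the paper: you unwind (\ref{eq:3.11}) into $\ln d_s(x) = s\,[\lambda([x,y_p]) - \lambda(x)] - \frac{1}{N-1}\ln\bigl(G^-_{\he(\iu)}(\pr_1(\omega(x)))/G^-_{e(\xi)}(\pr_1(\omega([x,y_p])))\bigr)$, establish the key identification $P_0(e(\xi)) = \pr_1(\omega([x,y_p]))$ via the forward/backward itinerary argument, and then combine smoothness of $\lambda$, smoothness of the Gauss curvatures along the flow-out of a fixed unstable leaf, and Lipschitz regularity of $x\mapsto [x,y_p]$ (coming from the Lipschitz lamination hypothesis), exactly as the paper does. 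The only cosmetic differences are that the paper treats the variable time shift $t(y)$ explicitly and bounds $\Lip(d_s) \leq \Con |\Im(s)|$, while you phrase the numerator's regularity in terms of a fixed reference wavefront and emphasize the decomposition into separated pieces $U_i$; neither changes the substance of the argument.
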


\begin{proof}  Consider the function $d_s$ on $U_{\ti}$ for some fixed $\ti = 1, \ldots, k$. 
Given $\ti$, there exists $\tp = 1, \ldots, k_0$ such that $\pr_1(\omega(U_{\ti})) \subset \Gamma_{\tp}$.
Fix an arbitrary $\tz \in \mt \cap S^*_{\delta_0}(\Omega)$ with a backward itinerary $\eta^{(\tp)}\in \Sigma^-_A$ and
$\pr_1(\omega(\tz)) \in \Gamma_{\tp}$ and an arbitrary $\ty\in U_{\ti}$ with backward 
itinerary $\hju^{(\ti)} \in \Sigma^-_{\aa}$. Consider
the surface $C = C_{\eta^{(\tp)}} (\tz)$ (see Subsection 2.2 for the definition).

Notice that for any $\iu = (i_0,i_1, \ldots) \in \sAA$ with $i_0 = \ti$ and any $y\in U_{\ti}$, locally the surface
$C_{\he(\iu)} (y)$ coincides with $\pr_1(\phi_t(U_{\ti}))$ for some $t = t(y)$ which is a Lipschitz
function of $y \in U_{\ti}$ (in fact $t(y)$ extends smoothly to a neighborhood of $U_{\ti}$ in the local 
unstable manifold containing it). Thus, $G_{\he(\iu)} (y)$ is a Lipschitz function of $y \in U_{\ti}$. 
Similarly, for $\xi = (\xi_0, \xi_1, \ldots) \in \saa$ with $\xi_0 = \tp$ and $z\in \mt$ with
$\pr_1(\omega(z)) \in \Gamma_{\tp}$, $G_{e(\xi)} (z)$ is a Lipschitz function of $z$.

Next, let $x \in U_{\ti}$. Then $x = \psi(\iu)$ for some $\iu = (i_0, i_1, \ldots) \in \Sigma_{{\mathcal A}}$ with $i_0 = \ti$, 
and by the choice of $\he(\iu)$ (see Subsection 2.3), we have $x = \Psi(\he(\iu)) \in U_{\ti}$. Thus,
$\hla(\he(\iu)) = \lambda(x)$, and the definition of $\kappa$ gives
$\kappa (\iu) = [x,y_{\tp}]$. Moreover, $x = \Psi(\he(\iu))$ shows that $P_0(\he(\iu)) = \pr_1(\omega(x))$.
Finally, notice that $P_0(e(\xi)) = \pr_1(\omega([x,y_{\tp}]))$. Indeed, the point $z = [x,y_{\tp}]$ lies
on $W^s(x)$, so its forward itinerary in the model $\sAA$ is the same as that of $x = \psi(\iu)$, i.e. it
is $\iu$. Thus,  the forward itinerary of $z$ in the model $\saa$ is $\ss(\iu) = \xi$. On the other hand,
$\phi_s(z) \in W^u(y_{\tp})$ for some (small) $s\in \R$, so $z$ has the same backward itinerary in the
model $\sa$ as $y_{\tp}$, i.e. it is $\eta^{(\tp)}$. Thus, $z$ lies on the trajectory determined by 
$e(\xi) = (\eta^{(\tp)}; \xi)\in \sa$, and therefore 
$P_0(e(\xi)) = \pr_1(\omega(z)) =  \pr_1(\omega([x,y_{\tp}]))$.

It now follows from the above and (\ref{eq:3.11}) that
$$\ln d_s(x) = s\, [\lambda([x,y_{\tp}]) - \lambda(x)] 
- \frac{1}{N-1}\, \ln \frac{G^-_{\he(\iu)} (\pr_1(\omega(x)))}{G^-_{e(\xi)} (\pr_1(\omega([x,y_{\tp}])))}\;,$$
Since $\lambda : R \longrightarrow [0,\infty)$ is Lipschitz (it is actually smooth on a neighborhood of 
$\mt$ in $S^*(\Omega)$) and $[\cdot, \cdot]$ is uniformly Lipschitz, 
it follows that $d_s(x)$ is Lipschitz with $\Lip(d_s) \leq \Con\, |s|$ when $\Re(s)$ is bounded, and we can also write
$$\Lip(d_s) \leq \Con\, |\Im(s)| \quad , \quad s\in \C\:, \: |\Re(s)| \leq \Con\;.$$
The same argument applies to the function $h_s$.
\end{proof}

Denote by $\clip(U)$ the {\it space of Lipschitz functions} $v : U \longrightarrow \C$ 
For such $v$  let $\Lip(v)$ denote the {\it Lipschitz constant} of $v$, and for $t\in \R$, $|t| \geq 1$, define
$$\|v\|_{\lip,t} = \|v\|_0 + \frac{\Lip(v)}{|t|} \quad , \quad
\|v\|_0 = \sup_{x\in U} |v(x)|\;.$$
Let $P(F)$ denote the {\it topological pressure} of $F$ defined by
$$P(F) = \sup_{\mu \in {\mathcal M}_{\tilde{\sigma}}} \bigl[h(\mu) + \int_{\Sigma_{\mathcal A}^+} F\, d\mu ],$$
where ${\mathcal M}_{\tilde{\sigma}}$ is the set of all probability measures on $ \Sigma_{\mathcal A}^+$ invariant with respect 
to $\tilde{\sigma}$ and  $h(\mu)$ is the measure theoretic entropy of $\tilde{\sigma}$ with respect to $\mu.$

In the next section we will need an estimate for the iterations $L^n_{-s\tf -h_0}$.  
Below we deal with more general situation when $\tg$ and $\hg$ are not necessarily constant functions. 
In particular, we study the case when  $g$ is defined as in Subsection 2.2 by Gauss curvatures at reflection points. This analysis is
motivated by applications related to the dynamical zeta function (see [PS2]). The case $g = h_0$ is covered by the
same argument.

We will apply Dolgopyat type estimates (\cite{kn:D}) 
established in the case of open billiard flows in \cite{kn:St2} 
for $N = 2$ and in \cite{kn:St3} for $N \geq 3$ under certain assumptions. 
We are now going to state these assumptions in details.


The following {\it pinching condition}\footnote{It appears that in the proof of the Dolgopyat type estimates  in the case of 
open billiard flows (and some geodesic flows), one should be able to replace the condition (P) by 
just assuming Lipschitzness  of the stable and unstable laminations -- this will be the subject of 
some future work.} is one of the assumptions needed below:

\ms

\noindent
{\sc (P)}:  {\it There exist  constants $C > 0$ and $\alpha > 0$ such that for every $x\in \mt$
we have
$$\frac{1}{C} \, e^{\alpha_x \,t}\, \|u\| \leq \| d\phi_{t}(x)\cdot u\| \leq C\, e^{\beta_x\,t}\, \|u\|
\quad, \quad  u\in E^u(x) \:\:, t > 0 \;,$$
for some constants $\alpha_x, \beta_x > 0$ depending on $x$ but independent of $u$ with
$\alpha \leq \alpha_x \leq \beta_x $ and $2\alpha_x - \beta_x \geq \alpha$ for all $x\in \mt$.}

\ms

Notice that when $N=2$ this condition is always satisfied. For $N \geq 3$, (P) follows from 
certain estimates on the eccentricity of the connected components $K_j$ of $K$.  According to
general regularity results (\cite{kn:PSW}), (P) implies that $W^u_\ep(x)$ and $W^s_\ep(x)$
are Lipschitz in $x\in \mt$. 

Next, consider the following {\it non-flatness condition}:

\ms

\noindent
{\sc (NF)}:  {\it For every $x\in \mt$ there exists $\ep_x > 0$ such that there is no
$C^1$ submanifold $X$ of $W^u_{\ep_x}(x)$ of positive codimension with
$\mt \cap W^u_{\ep_x}(x) \subset X$.}

\ms

Clearly this condition is always satisfied if $N =2$, while for $N \geq 3$ it is at least generic. 
In the proof of the main result in \cite{kn:St3} this condition plays a technical role, and 
one would expect that a future refinement of the proof would remove it.

Next, we need some definitions from \cite{kn:St3}.
Given $z \in \mt$, let $\exp^u_z : E^u(z) \longrightarrow W^u_{\ep_0}(z)$  and
$\exp^s_z : E^s(z) \longrightarrow W^s_{\ep_0}(z)$ be the corresponding
{\it exponential maps}. A  vector $b\in E^u(z)\setminus \{ 0\}$ is called  {\it tangent to $\mt$} at
$z$ if there exist infinite sequences $\{ v^{(m)}\} \subset  E^u(z)$ and $\{ t_m\}\subset \R\setminus \{0\}$
such that $\exp^u_z(t_m\, v^{(m)}) \in \mt \cap W^u_{\ep}(z)$ for all $m$, $v^{(m)} \to b$ and 
$t_m \to 0$ as $m \to \infty$.  It is easy to see that a vector $b\in E^u(z)\setminus \{ 0\}$ is  tangent to $\mt$ at
$z$ iff there exists a $C^1$ curve $z(t)$ ($0\leq t \leq a$) in $W^u_{\ep}(z)$ for some $a > 0$ 
with $z(0) = z,\: \dot{z}(0) = b$, and $z(t) \in \mt$ for arbitrarily small $t >0$. In a similar way one
defines tangent vectors to $\mt$ in $E^s(z)$.  

Denote by $d\alpha$ the standard symplectic form on $T^*(\R^N) = \R^N \times \R^N$. 
The following condition says that $d\alpha$ is in some sense 
non-degenerate on the `tangent space' of $\mt$ near some of its points:

\ms

\noindent
{\sc (ND)}:  {\it There exist $z_0\in \mt$, $\ep > 0$  and  $\mu_0 > 0$ such that
for any $\hz \in \mt \cap W^u_{\ep}(z_0)$ and any unit vector $b \in E^u(\hz)$ tangent to 
$\mt$ at $\hz$ there exist $\tz \in \mt \cap W^u_{\ep}(z_0)$ arbitrarily close to $\hz$ and  
a unit vector $a \in E^s(\tz)$ tangent to $\mt$ at $\tz$ with $|d\alpha(a,b) | \geq \mu_0$.}

\bs

Clearly when $N = 2$ this condition is always satisfied. In fact, it seems very likely (and there is some 
evidence supporting it) that this condition is always satisfied for open billiard flows.

Given a real-valued function $q \in {\mathcal F}_{\theta}(\sAA),$ there exists a unique number $s(g) \in \R$ such that $P(-s(g) r + g) = 0.$ The following is an immediate consequence of the main result in \cite{kn:St3}.

\begin{thm}   Assume that the billiard flow $\phi_t$ over $\mt$ satisfies the conditions
{\rm (P), (NF)} and {\rm (ND)}. Let  $q \in {\mathcal F}_{\theta}(\sAA)$ be a real-valued function such that $q \circ \psi^{-1} \in \clip(U)$.
Then for any $a > 0$ there exist constants $\sigma(g) < s(q),\: C = C(a) > 0$ and 
$0 < \rho < 1$ so that for $s = \tau + \i\, t$ with $\sigma(g) \leq \tau,\: |\tau| \leq a,\: |t| \geq 1$ and 
$n = p[\log |t|] + l,\: p \in \N,\: 0 \leq l \leq [\log |t|] - 1$, for every function $v: \sAA \longrightarrow \C$ with $v \circ \psi^{-1} \in C^{\lip}(U)$ we have
\begin{equation} \label{eq:3.14}
\|(\ll_{-sr + q}^n\ v) \circ \psi^{-1}\|_{\lip, t} \leq C \rho^{p[\log|t|]} e^{l P (-\tau r + q)}\|v \circ \psi^{-1}\|_{\lip, t}.
\end{equation}
\end{thm}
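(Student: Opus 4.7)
The plan is to obtain the estimate directly from the main theorem of \cite{kn:St3}, which is a Dolgopyat-type spectral bound of exactly the form (\ref{eq:3.14}) for the Ruelle operator $\ll_{-sr+q}$ attached to a Markov family of an open billiard flow under the hypotheses (P), (NF), and (ND). The work therefore splits into (i) verifying that our setup fits into the framework of \cite{kn:St3}, and (ii) carrying out a short Lasota--Yorke estimate to absorb the tail of $l < [\log|t|]$ iterations that lies outside the clean block structure.

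For (i), I would translate the data: the Markov family $\rr$ from Subsection~2.1 provides the symbolic model $\sAA$ with shift $\ts$, roof $r$, and the bijection $\psi:\sAA\to U$ of Proposition~2. By (P) and the regularity theorem of \cite{kn:PSW}, the strong stable and unstable laminations of $\phi_t$ over $\mt$ are Lipschitz in the base point; consequently the bracket $[\cdot,\cdot]$ and the stable projection $\piU$ are Lipschitz, so the condition $q\circ\psi^{-1}\in\clip(U)$ is precisely the Lipschitz-potential hypothesis on which \cite{kn:St3} operates, together with the $|t|$-scaled norm $\|\cdot\|_{\lip,t}$. Conditions (NF) and (ND) are the non-flatness and non-integrability hypotheses required to run the Dolgopyat cone/cancellation argument in \cite{kn:St3}. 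Feeding $q$ and $\tau\in[\sigma(q),a]$ into that theorem then produces constants $\rho\in(0,1)$ and $C_0=C_0(a)>0$, independent of $t$ and of the test function $w$, such that
\[
\|(\ll_{-sr+q}^{[\log|t|]}\,w)\circ\psi^{-1}\|_{\lip,t}\;\leq\;C_0\,\rho^{[\log|t|]}\,e^{[\log|t|]\,P(-\tau r+q)}\,\|w\circ\psi^{-1}\|_{\lip,t}
\]
for $|t|\geq t_0$; iterating this $p$ times covers the first $p[\log|t|]$ factors appearing in (\ref{eq:3.14}).

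For (ii), the residual $0\le l\le[\log|t|]-1$ iterates would be handled by a Lasota--Yorke-type bound
\[
\|(\ll_{-sr+q}^{l}\,w)\circ\psi^{-1}\|_{\lip,t}\;\leq\;C_1\,e^{l\,P(-\tau r+q)}\,\|w\circ\psi^{-1}\|_{\lip,t},
\]
uniform in $|t|\ge 1$, $0\le l\le[\log|t|]-1$, and $\tau\in[\sigma(q),a]$; concatenation then delivers (\ref{eq:3.14}) with $C=C_0C_1$. This tail estimate is the main obstacle of the argument: the potential $-sr+q$ has Lipschitz constant of order $|t|$, so a naive iteration would produce a forbidden factor $|t|^l$. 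The cure is that each application of $\ll_{-sr+q}$ also contracts the Lipschitz seminorm by the expansion rate of $\ts$, and the accumulated contraction over $l\le[\log|t|]$ iterations compensates the $|t|$-growth of the potential exactly, leaving the $1/|t|$ weight in $\|\cdot\|_{\lip,t}$ to absorb the surviving linear factor. Tracking these constants uniformly in $\tau\in[\sigma(q),a]$ and $|t|\ge 1$ is the only nontrivial bookkeeping, after which the theorem follows.
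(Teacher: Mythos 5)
Your proposal is correct and follows the same route as the paper: the paper gives no argument beyond the single sentence that Theorem~3 ``is an immediate consequence of the main result in [St3],'' and your part~(i) --- matching (P), (NF), (ND) and the Lipschitz hypothesis $q\circ\psi^{-1}\in\clip(U)$ to the hypotheses of that theorem via Proposition~2 and the regularity result of [PSW] --- is exactly the verification that this citation implicitly requires. Part~(ii) is unnecessary overhead: the main theorem of [St3] is already stated in the block form $n=p[\log|t|]+l$ with the combined bound $C\rho^{p[\log|t|]}e^{lP(-\tau r+q)}$, so estimate~(3.14) follows verbatim once the hypotheses are checked, without any additional Lasota--Yorke argument for the residual $l$ iterates.
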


As mentioned above, the conditions  ${\rm (P), (NF)}$ and ${\rm (ND)}$ are always satisfied for $N = 2$,
so  (\ref{eq:3.14}) hold for $N = 2$ without any additional assumptions. 

Now combining (\ref{eq:3.13}) and Theorem 3, we obtain estimates
for the iterations $L_{-s \tf + \tg}^n u$, provided $v = u \circ \ss$ is determined by a Lipschitz function on $R$.\\

To relate the quantities $P(-\tau r + \hat{g})$ and  $P(-\tau \tilde{f} + \tilde{g})$,  consider 
$$\underline{T}_n = \sum_{\sigma^n \iu = \iu} e^{- \tau r_n(\iu) + \hat{g}_n(\iu)} = \sum_{\sigma^n \iu = \iu}e^{-\tau d_{\gamma(\iu)} + g'_n(\iu)},$$
where $d_{\gamma(\iu)}$ is the length of the periodic trajectory $\gamma(\iu)$ determined by $\iu \in \sAA$. 
Since $\ss \circ \sigma = \sigma \circ \ss,$ we get
$$g'_n(\iu) = g_n(\ss \iu) = \tilde{g}_n(\ss \iu).$$
Setting $\ss \iu = v \in \saa$, we have $\sigma^n v = v$ and the periodic trajectory $\gamma(v)$ determined by $v$ has length 
$d_{\gamma(v)} = d_{\gamma(\iu)}.$ Thus
$$\underline{T}_n = \sum_{\sigma^n v = v} e^{-\tau d_{\gamma(v)} + \tilde{g}_n(v)} = \sum_{\sigma^n v = v} e^{-\tau \tilde{f}_n(v) + \tilde{g}_n(v)} = T_n\;.$$
On the other hand, it follows from a general property of the topological pressure that 
$$P(-\tau r + \hat{g}) = \lim_{n \to \infty} \frac{1}{n} \log \underline{T}_n$$
(see e.g. \cite{kn:R} or Theorem 20.3.7 in \cite{kn:KH} from which this property can be derived). Similarly,
$$P(-\tau \tilde{f} + \tilde{g})= \lim_{n \to \infty} \frac{1}{n} \log T_n \;,$$
so we get 
\begin{equation} \label{eq:3.15}
P(-\tau r + \hat{g}) = P(-\tau \tilde{f} + \tilde{g})\;.
\end{equation}
Introduce the number $s_0 \in \R$ such that $P(-s_0 r + \hg) = P(-s_0 \tf + \tg) = 0.$

As a consequence of (\ref{eq:3.15}), Theorem 3, Lemma 1 and Proposition 4 we obtain the following

\begin{thm}   Assume that the billiard flow $\phi_t$ over $\mt$ satisfies the conditions
{\rm (P), (NF)} and {\rm (ND)}.  Then for any $a > 0$ there exist constants $\sigma_0 < s_0$, $C' = C'(a) > 0$ and 
$0 < \rho < 1$ so that for any $s = \tau + \i\, t \in \C$ with $\tau \geq \sigma_0$, $|\tau| \leq a,\:|t| \geq 1$, any
integer $n = p[\log |t|] + l,\: p \in \N,\: 0 \leq l \leq [\log |t|] - 1$, and any function  $u: \saa \longrightarrow \R$ such that 
$u \circ \ss\circ \psi^{-1} \in \clip(U)$ we have
\begin{equation} \label{eq:3.16}
\left\|\left(L_{-s \tf + \tg }^n \, u\right)\circ \ss\circ \psi^{-1} \right\|_{\lip, t} 
\leq C' \rho^{p[\log|t|]} e^{l P (-\tau \tf + \tg)}\| u \circ \ss\circ \psi^{-1}\|_{\lip, t}.
\end{equation}
\end{thm}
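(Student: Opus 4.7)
The strategy is to transfer the Dolgopyat estimate of Theorem 3 for $\ll_{-sr+\hg}^n$ into an estimate for $L_{-s\tf+\tg}^n$ by means of the explicit conjugation identity (\ref{eq:3.13}), using Lemma 1 to control the resulting Lipschitz prefactors and the pressure identity (\ref{eq:3.15}) to match the exponential factors.

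First I would evaluate (\ref{eq:3.13}) at $\iu = \psi^{-1}(x)$ for $x \in U$. By (\ref{eq:3.5}) and (\ref{eq:3.11}) we have $\hh_s = h_s \circ \psi$ and $\hd_s = d_s \circ \psi$; setting $v := (\hh_s/\hd_s)\cdot(u\circ \ss)$ on $\saa$, the identity (\ref{eq:3.13}) becomes
$$\bigl[(L_{-s\tf+\tg}^n u)\circ \ss\circ \psi^{-1}\bigr](x) \;=\; \frac{d_s(x)}{h_s(x)}\,\bigl[(\ll_{-sr+\hg}^n v)\circ \psi^{-1}\bigr](x),$$
with $v\circ \psi^{-1} = (h_s/d_s)\cdot(u\circ \ss\circ \psi^{-1})$ on $U$.

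Next I would apply Theorem 3 with $q = \hg$ to the function $v$. The hypothesis (P) implies, via \cite{kn:PSW}, that the stable and unstable laminations are Lipschitz in $x\in \mt$, so Lemma 1 applies and $d_s, h_s \in \clip(U)$ with Lipschitz constants bounded by $\Con\,|t|$. For $\tau \in [\sigma(\hg), a]$ the value $|h_s(x)| = e^{-\tau\lambda(x)}$ stays bounded away from $0$ and $\infty$ on $U$, so the quotients $d_s/h_s$ and $h_s/d_s$ are Lipschitz on $U$ with Lipschitz constants $O(|t|)$ and uniform $L^\infty$-bounds; consequently there exists $M = M(a)$, independent of $t$ with $|t|\ge 1$, such that $\|d_s/h_s\|_{\lip,t},\,\|h_s/d_s\|_{\lip,t}\le M$. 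Using the sub-multiplicativity $\|fg\|_{\lip,t}\le \|f\|_{\lip,t}\|g\|_{\lip,t}$ of the weighted Lipschitz norm, one first obtains $\|v\circ \psi^{-1}\|_{\lip,t}\le M\,\|u\circ \ss\circ \psi^{-1}\|_{\lip,t}$, then invokes (\ref{eq:3.14}) of Theorem 3, and finally multiplies the outcome by $d_s/h_s$, picking up a further factor $M$. Replacing $P(-\tau r+\hg)$ by $P(-\tau\tf+\tg)$ via (\ref{eq:3.15}) then yields (\ref{eq:3.16}) with $C' = M^2 C$ and $\sigma_0 = \sigma(\hg)$.

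The main input one must verify is that $\hg$ fulfils the regularity assumption $\hg\circ \psi^{-1}\in \clip(U)$ of Theorem 3 (and analogously for the roof function, which differs from $\hf$ only by a coboundary). This reduces to showing that the cohomological corrections $\hchi_f$ and $\hchi_g$ are Lipschitz along $\psi^{-1}(U)$, which follows from the exponential decay of the Gauss-curvature cocycle along pairs of orbits with synchronized future coordinates --- the very mechanism already used in deriving (\ref{eq:3.7}) and (\ref{eq:3.8}) --- combined with the Lipschitz dependence of the unstable lamination on its base point guaranteed by (P). Once this regularity is established, the remaining argument is the formal algebraic assembly sketched above.
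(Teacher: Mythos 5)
Your proposal is correct and follows essentially the same route as the paper: conjugate via (\ref{eq:3.13}), control the prefactors $h_s/d_s$ and $d_s/h_s$ in the $\|\cdot\|_{\lip,t}$-norm using Lemma 1 (with Lipschitz constants $O(|t|)$ and uniform $L^\infty$ bounds), apply (\ref{eq:3.14}) with $q=\hg$, and then replace $P(-\tau r+\hg)$ by $P(-\tau\tf+\tg)$ using (\ref{eq:3.15}). The only substantive addition is your closing remark that one should verify $\hg\circ\psi^{-1}\in\clip(U)$ (the regularity hypothesis of Theorem 3), which the paper takes for granted; your sketch of why it holds is consistent with the paper's framework.
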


\begin{proof} 
Using Theorem 3 with $q = \hg$, we find constants $\sigma_0$, $C$ and $\rho$ satisfying (3.14)
with $q = \hg$. Given $a > 0$, let $s = \tau + \i \, t \in \C$ be such that $\tau \geq \sigma_0$, $|\tau| \leq a$ and $|t| \geq 1$, and let $n = p[\log |t|] + l,\: p \in \N,\: 0 \leq l \leq [\log |t|] - 1$. 

Consider an arbitrary function $u \in C(\saa)$ such that $v = u\circ \ss\circ \psi^{-1}\in \clip(U)$.
It follows from Lemma 1 (and its proof) that there exists a constant $\Con > 0$, depending on $a$, such that
for $s\in \C$ with $|\Re(s)| \leq a$ we have $\Lip(h_s) \leq \Con\, |t|$ and 
$\Lip(d_s) \leq \Con\, |t|$. From the definitions of these functions we also have
$\|h_s\|_0 \leq \Con$, $\|d_s\|_0\leq \Con$, $\|d_s/h_s\|_0 \leq \Con$ and $\|h_s/d_s\|_0 \leq \Con$. 
This implies $\Lip(h_s/d_s) \leq \Con \, |t|$ and $\Lip(d_s/h_s) \leq \Con \, |t|$. Hence
$$\Lip( (h_s/d_s)\, v) \leq \|h_s/d_s\|_0 \, \Lip(v) + \|v\|_0\, \Lip(h_s/d_s)
\leq \Con (\Lip(v) + |t|\, \|v\|_0)\;.$$
Thus,
$$\|(h_s/d_s)\, v\|_{\lip,t} = \|(h_s/d_s)\, v\|_0 + \frac{\Lip( (h_s/d_s)\, v)}{|t|}
\leq \Con\, \left(\|v\|_0 + \frac{\Lip(v)}{|t|}\right) = \Con\, \|v\|_{\lip,t}\;.$$
Similarly, 
$$ \left\|\frac{d_s}{h_s}\; \ll_{-s \, r + \hg }^n \left( \frac{h_s}{d_s}\, v\right) \right\|_{\lip, t}
\leq \Con\, \left\|\ll_{-s \, r + \hg }^n \left( \frac{h_s}{d_s}\, v\right) \right\|_{\lip, t}\;.$$

Using the above, (3.14), (3.13) and (3.15) we get
\begin{eqnarray*}
\left\|\left(L_{-s \tf + \tg }^n \, u\right)\circ \ss\circ \psi^{-1} \right\|_{\lip, t}
& =    & \left\|\frac{d_s}{h_s}\; \ll_{-s \, r + \hg }^n \left( \frac{h_s}{d_s}\, v\right) \right\|_{\lip, t}
\leq  \Con\, \left\|\ll_{-s \, r + \hg }^n \left( \frac{h_s}{d_s}\, v\right) \right\|_{\lip, t}\\
& \leq & \Con\, \rho^{p[\log|t|]} e^{l P (-\tau r + \hg)}\, \left\| (h_s/d_s)\, v\right\|_{\lip, t}\\
& \leq & \Con\, \rho^{p[\log|t|]} e^{l P (-\tau\, \tf + \tg)}\, \| v\|_{\lip, t}\;.
\end{eqnarray*}
This proves (\ref{eq:3.16}).
\end{proof}

\section{Proofs of Theorems 1 and 2}
\renewcommand{\theequation}{\arabic{section}.\arabic{equation}}
\setcounter{equation}{0}

\def\tf{\tilde{f}}

Consider the space $\sa$ and the function $f: \Sigma_A \longrightarrow \R^{+}$ introduced in subsection 2.2. 
Let ${\mathcal M}_{\sigma}$ be the {\it space of all probability measures on $\Sigma_A$  invariant with respect to $\sigma$}. 
For a continuous function  $G: \Sigma_A \longrightarrow \R$, the {\it pressure} $P(G)$ is defined by
$$P(G) = \sup\Bigl[h(\mu) + \int G \; d\mu : \: \mu \in {\mathcal M}_{\sigma}\Bigr],$$
where $h(\mu)$ is the measure theoretic entropy of $\sigma$ with respect to $\mu$.
The measure of maximal entropy $\mu_0$ for $\sigma$ is  determined  by
$$h(\mu_0) = \sup\{h(\mu):\: \mu \in {\mathcal M}_{\sigma}\} = P(0) = h_0,$$
$h_0 > 0$ being the topological entropy of $\sigma.$ Since $\sigma$ is conjugated to the billiard ball map $B: \Lambda_{\partial K} \longrightarrow \Lambda_{\partial K}$, introduced in Sect. 2, $h_0$ coincides with the topological entropy of $B$. On the other hand, the matrix $A$ related to the symbolic codings with obstacles has an unique maximal simple eigenvalues $\lambda > 1$ and $h_0 = \log \lambda.$  Notice that if we consider, as in Section 2, the function $\tf\in {\mathcal F}_{\theta}(\sa)$ depending only on future coordinates, then $P(f) = P(\tf).$

Next, consider the space $\overline {\sa}= \sa \times \sa$ and the shift operator $\bar{\sigma}(x, y) = (\sigma x, \sigma y).$ 
Notice that $\overline{\sa}$ is a subshift of finite type with matrix
$$\bar{A} \Bigl((i, j), (i', j')\Bigr) = A(i, j) A(i', j').$$

Given a function $G: \overline{\sa} \longrightarrow \R,$ we define the {\it pressure} $P(G)$ by
$$P(G) = \sup \Bigl[ h(m) + \int G \; d m : \: m \in {\mathcal M}_{\bar{\sigma}}\Bigr],$$
${\mathcal M}_{\tilde{\sigma}}$ being the space of probability measures on $\overline {\sa}$ invariant with respect to $\tilde{\sigma}$. 
As in \cite{kn:PoS2}, the measure of maximal entropy $m_0$ for $\bar{\sigma}$ is equal to 
$\mu_0 \times \mu_0$ and  the topological entropy of $\bar{\sigma}$ is $2h_0$. 

Consider the function $F(x, y) = \tf(x) - \tf(y),\: x, y \in \saa$, and notice that $F(sR)\vert_{s=0} = 2h_0,$
$$\frac{d}{ds} F(sR) \vert_{s = 0} = \int F(x, y) \; dm_0(x,y) = \int \tf(x) d\mu_0(x) - \int \tf(y) d \mu_0(y) = 0\;.$$
Here we  used the fact that 
$$\frac{d}{ds} P(sf) \bigl\vert_{s = 0} = \int \tf(x) d\mu_0.$$

\begin{lem}There do not exist constants $a > 0$ and $c\in \R$ and continuous functions $\psi: \overline{\sa} \longrightarrow \R$
and $ M: \overline{\sa} \longrightarrow a\Z$ such that
$$F= \psi \circ \bar{\sigma} - \psi + M + c \;.$$
\end{lem}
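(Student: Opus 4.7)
The plan is to argue by contradiction, deducing from a hypothetical decomposition $F = \psi\circ\bar\sigma - \psi + M + c$ a lattice condition on the primitive periods of $\sigma$ that contradicts the topological weak-mixing (equivalently, non-lattice) property of the billiard flow $\phi_t$ on $\Lambda$ stated in Section~1. The argument proceeds in three steps.

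First, I would sum the hypothetical identity along periodic orbits of $\bar\sigma$: for any $(x,y)$ with $\sigma^n x = x$ and $\sigma^n y = y$, the $\psi$ terms telescope and one obtains $\tf_n(x) - \tf_n(y) = M_n(x,y) + nc$ with $M_n(x,y)\in a\Z$. Setting $y = x$ yields $nc \in a\Z$ for every integer $n$ that is a period of some point in $\sa$. Because $\sa$ is a topologically mixing subshift of finite type, all sufficiently large integers are periods, so from $nc, (n+1)c \in a\Z$ I conclude $c \in a\Z$; absorbing $c$ into $M$ I may assume $c = 0$, which gives $\tf_n(x) - \tf_n(y) \in a\Z$ whenever $\sigma^n x = x$ and $\sigma^n y = y$.

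Second, I would fix a reference primitive periodic orbit $\gamma_0$ of minimal combinatorial period $n_0$ and length $T_{\gamma_0}$, set $\tau := T_{\gamma_0}/n_0$, and for an arbitrary primitive orbit $\gamma$ of minimal period $n_\gamma$ choose representatives $x_\gamma, x_0\in \sa$ of combinatorial period $N := n_0 n_\gamma$. Since $\tf_{n_\gamma}(x_\gamma) = T_\gamma$ and Birkhoff sums are additive along orbits, one has $\tf_N(x_\gamma) = n_0 T_\gamma$ and $\tf_N(x_0) = n_\gamma T_{\gamma_0} = n_0 n_\gamma \tau$; Step~1 then yields $n_0(T_\gamma - n_\gamma \tau) \in a\Z$, i.e.\ $T_\gamma - n_\gamma \tau \in \alpha\Z$ with $\alpha := a/n_0 > 0$. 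Hence every primitive periodic orbit has length in $n_\gamma \tau + \alpha\Z$.

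Third, I would invoke a Livsic-type argument. The H\"older function $\tf - \tau$ has all periodic Birkhoff sums in the closed subgroup $\alpha\Z \subset \R$, so its reduction modulo $\alpha$ is an $\R/\alpha\Z$-valued cocycle with vanishing periodic sums; Livsic's theorem applied in the circle group produces a continuous $\overline\Psi : \sa \to \R/\alpha\Z$ with $(\tf - \tau)\bmod \alpha = \overline\Psi\circ\sigma - \overline\Psi$, and because $\sa$ is totally disconnected this lifts to a continuous $\Psi : \sa \to \R$, yielding $\tf = \tau + \Psi\circ\sigma - \Psi + N$ with $N : \sa \to \alpha\Z$ continuous. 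Since $\tf$ and $f$ differ by a continuous coboundary, $f$ itself is cohomologous to a function taking values in $\tau + \alpha\Z$, contradicting the non-lattice property of $\phi_t$. The main obstacle is this last step: the Livsic step in the circle group and the continuous lifting from a totally disconnected base both need to be invoked carefully. Alternatively, one may observe that the conclusion of Step~2 is itself one of the standard equivalent formulations of the lattice condition for the suspension flow built over $\sa$ with roof $f$, and thus directly contradicts the stated non-lattice property of $\phi_t$ without explicit cohomological lifting.
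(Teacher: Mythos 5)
Your proof is correct, and it takes a genuinely different route from the paper's. Steps 1 and 2 are sound, and Step 3 reduces Lemma 2 to the non-lattice property of $\phi_t$ stated in Section 1, via Livsic's theorem for circle-valued H\"older cocycles plus a lifting argument (valid because $\sa$ is a Cantor set, so every continuous map $\sa\to\R/\alpha\Z$ lifts to $\R$). The paper is more hands-on: after fixing a period-$2$ reference orbit $y$ of length $2d$, the hypothetical decomposition gives the integer relation $\tf_{2k}(x)=M_{2k}(x,y)+2kq$ with $q=c+d$, and this is contradicted directly by plugging in the explicit family of primitive configurations $\alpha_k$ from Lemma~5.2 of [St1], whose lengths obey the squeeze $T_{k-1}+4d<T_k<T_{k-1}+4d+C\delta^{2k-4}$; the induced integers $z_k$ would then have to satisfy $z_{k-1}+2z_0<z_k<z_{k-1}+2z_0+(C/a)\delta^{2k-4}$, impossible once $k$ is large. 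Both proofs lean on [St1], but at different levels: you invoke its top-level non-lattice conclusion, while the paper re-runs the underlying orbit-length estimate. Your argument is more modular and conceptually cleaner, but it needs Livsic's theorem and the lift from $\R/\alpha\Z$ to $\R$, which you yourself flag as the main obstacle and which should be stated precisely; the paper's is elementary and avoids cohomology entirely. Your alternative at the end (quoting the periodic-sum characterization of the lattice condition) is legitimate but simply relocates that same Livsic input into a citation. One small point worth making explicit: the deduction $c\in a\Z$ in Step 1 uses that consecutive integers $n$ and $n+1$ both occur as periods in $\sa$, which holds since $\kappa_0\ge 3$.
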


\begin{proof} Let $d = \dist(K_1, K_2) > 0$. Assume that the above equation holds for some $a > 0$, $c\in \R$ and continuous
functions $\psi: \overline{\sa} \longrightarrow \R$ and $ M: \overline{\sa} \longrightarrow a\Z$. Fix a point
$y \in \sa$  with $\sigma^2 y = y$ corresponding  to a periodic trajectory with 2 reflection points and length $2d.$ 
Then for every $n = 2k \geq 2$ and $\sigma^n x = x,$ we get
$$F_n(x, y) = \tf_{2k}(x) - \tf_{2k}(y) = M_{2k}(x,y) + 2k c$$
so
$$\tf_{2k}(x) = M_{2k}(x, y) + 2kq,\: \forall k \in \N$$
with $q = c + d.$
Now we will exploit the construction in Lemma 5.2 in \cite{kn:St1}, where configurations 
$$\alpha_k = \{\underbrace{1,2,1,2,...,1,2}_{4k\:\:{\rm terms}},3,1\}, \: k \in \N$$
with $4k +2$ terms has been considered. Let $T_k$ be the periods of the primitive periodic rays following the configurations $\alpha_k.$ It was shown in \cite{kn:St1} that $T_k$ satisfy the estimate
$$T_{k-1} + 4d < T_k < T_{k-1} + 4d + C \delta^{2k -4}$$
with some global constants $0 < \delta < 1, \: C > 0$ independent on $k$. Then
$$T_k = a z_k + (4k + 2)q, \: T_{k-1} = a z_{k-1} + (4k -2)q, \:2d = a z_0 + 2q$$
with some integers $z_0, z_{k-1}, z_k \in \Z.$ This implies
$$z_{k-1} + 2z_0 < z_k < z_{k-1} + 2z_0  + \frac{C}{a} \delta^{2k - 4}.$$
Letting $k \to \infty$, we obtain a contradiction. 
\end{proof}

An application of Lemma 2 shows (see \cite{kn:PP}) that there exists $\beta > 0$ such that
$$\frac{d^2}{ds^2} P(sF)\vert_{s= 0} = \beta^2 > 0.$$
Repeating the proof of Lemma 1.4 and Lemma 1.5 in \cite{kn:PoS2}, we obtain the following
\begin{lem} The function $t \rightarrow e^{itF}$ has a Taylor expansion
$$e^{itF} = e^{2h_0}\Bigl(1 - \frac{\beta^2 t^2}{2} + {\mathcal O}(|t|^3)\Bigr)$$
with ${\mathcal O}(|t|^3)$ uniform on bounded intervals. Moreover, there exists a change of coordinates $v = v(t)$ such that for $|t| \leq \epsilon$ we have $e^{itR} = e^{2h_0}(1 - v^2).$
\end{lem}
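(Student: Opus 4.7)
The plan is to derive the asserted expansion from real-analyticity of the topological pressure together with the derivative computations performed in the paragraph immediately preceding the statement. I read $e^{itF}$ in the statement as shorthand for the leading eigenvalue $e^{P(itF)}$ of the Ruelle operator $\ll_{itF}$ acting on Lipschitz functions on $\overline{\sa}$, in parallel with Lemma 1.4 of \cite{kn:PoS2}.

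First I would invoke the classical result from thermodynamic formalism for the two-sided subshift of finite type $\bar\sigma$ on $\overline{\sa}$ (see \cite{kn:PP}) that for any H\"older function $F \in \ff_\theta(\overline{\sa})$ the map $s \mapsto P(sF)$ is real-analytic in a neighborhood of $s = 0$ and extends holomorphically to $s \in \C$ in a complex neighborhood of the origin. The derivative data that enter are exactly those assembled in the discussion preceding the statement: $P(0) = 2h_0$ is the topological entropy of $\bar\sigma$; $\frac{d}{ds}P(sF)|_{s=0} = \int F\, dm_0 = 0$ by the symmetry computation performed above; and $\frac{d^2}{ds^2}P(sF)|_{s=0} = \beta^2 > 0$, the positivity being the content of the standard Parry--Pollicott criterion applied to the non-cohomology statement established in Lemma 2.

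Substituting $s = it$ into the holomorphic Taylor expansion at the origin and using the second-order data yields
$$P(itF) = 2h_0 - \frac{\beta^2 t^2}{2} + O(|t|^3),$$
with the error term uniform on compact intervals in $t$. Exponentiating then gives
$$e^{P(itF)} = e^{2h_0}\left(1 - \frac{\beta^2 t^2}{2} + O(|t|^3)\right),$$
which is the stated expansion.

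For the change of coordinates, set $\varphi(t) = 1 - e^{-2h_0} e^{P(itF)}$. Then $\varphi$ is real-analytic near $0$ with $\varphi(0) = \varphi'(0) = 0$ and $\varphi''(0) = \beta^2 > 0$, so one may write $\varphi(t) = t^2\, \varphi_1(t)$ with $\varphi_1$ real-analytic near $0$ and $\varphi_1(0) = \beta^2/2 > 0$. Taking the analytic square root, the function $v(t) = t\sqrt{\varphi_1(t)}$ is real-analytic on some interval $|t| \leq \epsilon$ with $v(0) = 0$, $v'(0) = \beta/\sqrt{2} \neq 0$, and satisfies $v(t)^2 = \varphi(t)$, so that $e^{P(itF)} = e^{2h_0}(1 - v^2)$ as required. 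The substantive ingredient, namely $\beta^2 > 0$, was already secured in Lemma 2; the remainder is formal, combining analyticity of the pressure, the vanishing of its first derivative, and a one-variable Morse-type change of coordinates exactly as in Lemmas 1.4 and 1.5 of \cite{kn:PoS2}.
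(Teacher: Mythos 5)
Your reading of $e^{itF}$ as the leading eigenvalue $e^{P(itF)}$ of the transfer operator $\ll_{itF}$ is the intended one (the paper's notation is abusive, as is the $e^{itR}$ in the second clause, both typos for the same quantity), and your argument --- analyticity of the pressure, the derivative computations $P(0)=2h_0$, $P'(0)=0$, $P''(0)=\beta^2>0$ from the preceding paragraph and Lemma 2, followed by exponentiation and a one-variable Morse change of coordinates $v=t\sqrt{\varphi_1(t)}$ --- is precisely what the paper's phrase ``repeating the proof of Lemma 1.4 and Lemma 1.5 in \cite{kn:PoS2}'' refers to. The one detail left implicit is why the leading eigenvalue $e^{P(itF)}$ is real for small real $t$ (needed for $\varphi_1$ to be real-valued and have a real square root): this comes from the symmetry $F(y,x)=-F(x,y)$ under the $\bar\sigma$-commuting involution $(x,y)\mapsto(y,x)$, which combined with complex conjugation of $\ll_{itF}$ forces the simple leading eigenvalue to equal its own conjugate; once that is noted your proof is complete and matches the cited source.
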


We say that $x \in \saa$ is a prime point if $\sigma^n x = x$ for some $n \geq 2$ and there are no integers $m < n$ with the property $\sigma^m x = x.$ Next, as in \cite{kn:PoS2}, given a continuous non-negative function $\chi: \R \rightarrow \R$ with compact support, we introduce the function
$$\rho_N(\chi) = \sum_{\gamma, \gamma'\: \atop |\gamma|, |\gamma'| \leq N} \chi( T_{\gamma} - T_{\gamma'}).$$
Then
$$\rho_N(\chi) = \sum_{n,m = 1}^N \sum_{|\gamma| = n}\sum_{\gamma'| = m} \chi(T_{\gamma} - T_{\gamma'})$$
$$= \Xi_N(\chi) + {\mathcal O}\Bigl(\|\chi\|_{\infty} (\log N)^2 e^{3h_0 N/2}\Bigr)$$
with
$$\Xi_N(\chi) = \sum_{|\gamma| = n}\sum_{|\gamma'| = m} \frac{1}{n m} \sum_{\sigma^n x= x,\:\sigma^m y = y\atop x, y \:\:{\rm prime}\: {\rm points}} \chi(T_{\gamma} - T_{\gamma'}) $$
$$= \sum_{|\gamma| = n}\sum_{|\gamma'| = m} \frac{1}{n m} \sum_{\sigma^n x= x,\:\sigma^m y = y\atop x, y \:\:{\rm prime}\: {\rm points}} \chi(\tf_n(x) - \tf_m(y)).$$

Next the {\it proof of Theorem 1} follows without any change that of Theorem 1 in \cite{kn:PoS2} and we omit the details.

\bs

{\bf Proof of Theorem 2}. The crucial point is an estimate for the iterations of the Ruelle operator
$$L_{it \tf -h_0} w(\xi) = \sum_{\sigma \eta = \xi} e^{it \tf(\eta) - h_0} w(\eta) = e^{-h_0} \sum_{\sigma \eta = \xi} e^{it \tf(\eta)} w(\eta) .$$
It was shown in Section 3 that $P(-\tau r-h_0) = P(-\tau \tf - h_0)$. This shows that $P(-sr - h_0)\vert_{s = 0} = h_0$,
so  for the Ruelle operator $L_{-s r - h_0}$ and $s = it,\: |t| \geq 1$ we can apply the estimates (\ref{eq:3.14}) with $v = 1.$ 
Next, the Ruelle operator $L_{it\tf - h_0}$ is conjugated to $L_{itr - h_0}$ by (\ref{eq:3.13}), hence the estimates (\ref{eq:3.14}) can be applied 
to $L_{it\tf - h_0}^n 1.$  More precisely, there exist $0 < \rho_1 < 1$ and $C > 0$ so that for $n = p[\log|t|] + l,\: 0 \leq l \leq [\log|t|] - 1$ we have
$$\|L^n_{it \tf- h_0} 1\|_{\infty} \leq C^{1/2} \rho_1^{p[\log|t|]} \: , \: n \geq 1\;.$$
This yields
$$\|L_{\pm it \tf}^n 1 \|_{\infty} \leq C^{1/2}e^{h_0 n} \rho^{p[\log|t|]/2} \: ,\: n \geq 1\;,$$
with $0 < \rho = \rho_1^{1/2} < 1.$ 

Consider the transfer operator
$$\Bigl(L_{it F} v\Bigr)(x, y) = \sum_{\bar{\sigma}(x', y') = (x, y)} e^{itF(x', y')} v(x', y').$$
Since
$$\Bigl(L^n_{it F}1\Bigr)(x_0, y_0) = \sum_{\sigma^n x = x_0, \atop \sigma^n y = y_0} e^{it F_n(x, y)} = \sum_{\sigma^n x = x_0} e^{it \tf_n(x)} \sum_{\sigma^n y = y_0} e^{-it \tf_n(y)}$$
$$= L^n_{it \tf}1(x_0) L^n_{-it \tf}1(y_0)\;,$$
it follows that
\begin{equation} \label{eq:4.1}
\|L^n_{it F} 1\|_{\infty} \leq C e^{2 h_0 n} \rho^{p[\log|t|]} \leq C e^{2 h_0 n} \rho^n \rho^{-\log|t|} \leq Ce^{2 h_0 n} \min\{\rho^n|t|^{\alpha}, 1\}
\end{equation}
with $\alpha = |\log \rho|.$ Thus we obtain the following

\begin{lem} There exists $C > 0, \: 0 < \rho <1$ and $\alpha > 0$ such that for $|t| \geq 1 > 0$ we have
$$\|L_{it F}^n 1\|_{\infty} \leq C e^{2 h_0 n} \min\{\rho^n|t|^{\alpha}, 1\}.$$
\end{lem}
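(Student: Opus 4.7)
The plan is to assemble Lemma 4 in three short steps, all of them routine once Theorem 4 is available. The key observation is that $F(x,y) = \tf(x) - \tf(y)$ is of product type on $\overline{\sa} = \sa \times \sa$, so the transfer operator $L_{itF}$ tensorizes, and one only needs good bounds for $L_{\pm it\tf}^n 1$.

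First I would specialize Theorem 4 to $\tg = -h_0$ (constant, hence trivially Lipschitz on $U$) and to the test function $u \equiv 1$ (constant, so $u\circ\ss\circ\psi^{-1}\equiv 1\in\clip(U)$). Since $P(-h_0) = h_0 - h_0 = 0$, the number $s_0$ associated to this choice of $\tg$ is $0$, and for $s = it$ with $|t|\geq 1$ one has $\tau = 0 > \sigma_0$ and $|\tau| = 0 \leq a$ for any $a>0$. Theorem 4 then yields, for $n = p[\log|t|] + l$ with $0\leq l \leq [\log|t|]-1$,
\[
\|L_{it\tf - h_0}^n 1\|_\infty \leq \|(L_{it\tf-h_0}^n 1)\circ\ss\circ\psi^{-1}\|_{\lip,t} \leq C^{1/2}\rho_1^{p[\log|t|]},
\]
for some $0<\rho_1<1$ (pressure exponent vanishes since $P(-h_0)=0$). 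Since $L_{it\tf}^n = e^{nh_0}L_{it\tf-h_0}^n$, this gives $\|L_{it\tf}^n 1\|_\infty \leq C^{1/2}e^{h_0 n}\rho^{p[\log|t|]/2}$ with $\rho = \rho_1^{1/2}\in(0,1)$; replacing $t$ by $-t$ gives the same bound for $L_{-it\tf}^n 1$.

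Second, I would use the direct calculation
\[
L_{itF}^n 1(x_0,y_0) = \sum_{\sigma^n x = x_0,\,\sigma^n y = y_0} e^{it(\tf_n(x)-\tf_n(y))} = (L_{it\tf}^n 1)(x_0)\cdot(L_{-it\tf}^n 1)(y_0),
\]
so the two $\sup$-norm bounds from step one multiply to give $\|L_{itF}^n 1\|_\infty \leq C\,e^{2h_0 n}\rho^{p[\log|t|]}$ (the square in the constants and decay rate is exactly the reason step one was calibrated with the $1/2$ exponents).

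Third, I would pass from $\rho^{p[\log|t|]}$ to the claimed $\min\{\rho^n |t|^\alpha,\,1\}$: since $p[\log|t|] = n-l \geq n - [\log|t|]$,
\[
\rho^{p[\log|t|]}\leq \rho^n\,\rho^{-[\log|t|]} \leq \rho^n\,|t|^{\alpha}
\]
with $\alpha = |\log\rho|$; the alternative bound $\rho^{p[\log|t|]}\leq 1$ is trivial, so (after absorbing the usual $e^{\alpha}$-type factor in the constant $C$) we obtain the minimum formulation. The main potential obstacle in this plan is not really internal to Lemma 4 — the algebra is bookkeeping — but consists of ensuring that the hypotheses of Theorem 4 are genuinely applicable to the very specific pair $(u,\tg) = (1,-h_0)$ and at the degenerate value $\tau = 0$. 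That is handled by the vanishing of the pressure $P(-h_0) = 0$, which forces $s_0 = 0$ and makes $\tau = 0$ admissible. The genuinely hard analysis — the Dolgopyat estimate for $\ll_{-sr+\hg}$, the conjugation formula (\ref{eq:3.13}), and the Lipschitz bounds of Lemma 1 — is already subsumed in Theorem 4.
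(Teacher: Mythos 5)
Your proof is correct and follows essentially the same route as the paper: bound $\|L_{\pm it\tf}^n 1\|_\infty$ by conjugating to the Markov model via (3.13) and Lemma 1 (which is what Theorem 4 packages), then tensorize using $L_{itF}^n 1(x_0,y_0) = L_{it\tf}^n 1(x_0)\,L_{-it\tf}^n 1(y_0)$, and finally convert $\rho^{p[\log|t|]}$ into $\min\{\rho^n|t|^\alpha,1\}$ with $\alpha=|\log\rho|$. The only cosmetic difference is that you invoke Theorem 4 (which assumes the geometric hypotheses (P), (NF), (ND)), while the paper works directly from the hypotheses as stated in Theorem 2 — that (3.14) holds for $\ll_{itr-h_0}$ and the laminations are Lipschitz — but since Theorem 4 is nothing more than the combination of (3.14), (3.13), Lemma 1 and (3.15), the logical content is identical.
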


\def\ss{{\mathcal S}}

Next, consider the function
$$\ss_N(t) = \sum_{n,m = 1}^N \sum_{\sigma^n x = x, \sigma^m x = x,\atop
x,\: y \:{\rm prime\:\: points}} e^{it(f^n(x) - f^m(y))}.$$
Let $\chi \in C_0^{\infty}(\R)$ be a nonnegative function and let $\hat{\chi}$ be the Fourier transform of $\chi.$
Given a sequence $\epsilon_n = {\mathcal O}(e^{-\eta n}),$ denote
$$A_2(N, z) = \Bigl| \int_{|t| \geq \epsilon \beta \sqrt{N}} e^{\frac{itz}{\beta \sqrt{N}}} \Bigl[e^{-2h_0 N}\Bigl(\ss_N\Bigl(\frac{t}{\beta\sqrt{N}}\Bigr) \hat{\chi}\Bigl(\frac{\epsilon_N t}{\beta \sqrt{N}}\Bigr)\Bigr)\Bigr]dt \Bigr |.$$
As in \cite{kn:PoS2}, applying Lemma 4 we obtain the following

\begin{lem} For sufficiently small $\eta > 0$ and for $N \to + \infty$ we have $\sup_{z \in \R} A_2(N, z) = 0.$
\end{lem}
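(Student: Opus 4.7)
The plan is to reduce $A_2(N,z)$ to an integral involving iterates of the Ruelle transfer operator, then apply Lemma~4 together with the Schwartz decay of $\hat{\chi}$. First, since $|e^{itz/(\beta\sqrt{N})}|=1$, passing the modulus inside the integral eliminates the $z$-dependence; after the substitution $s = t/(\beta\sqrt{N})$,
\[
A_2(N,z) \;\leq\; \beta\sqrt{N}\, e^{-2h_0 N}\int_{|s|\geq \epsilon} |\ss_N(s)|\, |\hat{\chi}(\epsilon_N s)|\, ds.
\]
Setting $T_n(s) = \sum_{\sigma^n x = x,\, x\text{ prime}} e^{is\tf_n(x)}$ and $U_N(s) = \sum_{n=1}^N T_n(s)$, the definition of $\ss_N$ gives $\ss_N(s) = U_N(s)\overline{U_N(s)} = |U_N(s)|^2$ (using that $\tf$ is real). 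The standard bound of periodic-point sums by transfer-operator sup norms, $|T_n(s)| \leq C\|L^n_{is\tf}1\|_\infty$, combined with the product structure $L^n_{isF}1(x,y) = L^n_{is\tf}1(x)\,\overline{L^n_{is\tf}1(y)}$ and Lemma~4, yields
\[
|T_n(s)|^2 \;\leq\; C\|L^n_{isF}1\|_\infty \;\leq\; C' e^{2h_0 n} \min\{\rho^n|s|^\alpha,\, 1\} \qquad (|s|\geq 1).
\]

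Next, I would split the $s$-integration into three ranges: $\{\epsilon \leq |s| \leq 1\}$, $\{1 \leq |s| \leq M_N\}$, and $\{|s|>M_N\}$, where $M_N \to \infty$ is chosen so that $\epsilon_N M_N \to \infty$; the hypothesis $\epsilon_N = \mathcal{O}(e^{-\eta N})$ with $\eta$ small permits a choice $M_N = e^{\eta' N}$ with $0 < \eta' < \eta$. On the first range Lemma~4 does not directly apply, but the non-lattice property of $\phi_t$ mentioned in Section~1 forces the spectral radius of $L_{is\tf-h_0}$ to be strictly less than $1$ uniformly for $s\in[\epsilon,1]$, so $|T_n(s)| \leq C e^{(h_0-\delta)n}$ and this piece contributes $O(\sqrt{N}\,N^2 e^{-2\delta N}) = o(1)$. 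On the middle range, the Dolgopyat bound above applies; splitting the inner sum at the transition $n^*(s) \sim (\alpha/|\log\rho|)\log |s|$ and summing geometrically gives $e^{-2h_0 N}|U_N(s)|^2 \leq C(|s|^{2\gamma} + N^2|s|^\alpha \rho^N)$ for an explicit $\gamma$ depending on $h_0, \rho, \alpha$; integrating against $|\hat{\chi}(\epsilon_N s)|$ and using $\int |\hat{\chi}(\epsilon_N s)|\, ds = O(1/\epsilon_N)$ yields a bound which is $o(1)$ provided $\eta$ is small enough. On the tail $|s|>M_N$ the trivial estimate $|\ss_N(s)|\leq C N^2 e^{2h_0 N}$ together with the Schwartz decay $|\hat{\chi}(u)| \leq C_p (1+|u|)^{-p}$ (valid since $\chi \in C_0^\infty$) gives a rapidly vanishing contribution since $\epsilon_N M_N \to \infty$.

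The main obstacle is the middle range: one must balance three rates---the Dolgopyat decay $|\log\rho|$, the entropy $h_0$, and the rate $\eta$ of $\epsilon_N$---so that the combined factor $\sqrt{N}\, e^{-2h_0 N}\cdot(\text{Dolgopyat sum})\cdot M_N^{\alpha+1}/\epsilon_N$ tends to zero. This is precisely where ``sufficiently small $\eta$'' enters: we must choose $\eta$ small enough (depending on $\rho$, $h_0$, $\alpha$) so that the exponential savings from the Dolgopyat factor $\rho^N$ dominate the polynomial-in-$M_N$ growth picked up from the widening of $\hat{\chi}(\epsilon_N \cdot)$. Once the exponents are correctly matched, the three pieces are each $o(1)$ uniformly in $z$, establishing $\sup_{z\in\R} A_2(N,z) \to 0$.
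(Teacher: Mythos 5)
Two points first on context.  The paper itself gives no proof of this lemma; it says only ``As in \cite{kn:PoS2}, applying Lemma~4 we obtain the following,'' so what you are really being asked to reconstruct is the Pollicott--Sharp calculation.  Your overall plan --- pull the modulus inside to kill the $z$-dependence, change variable $s = t/(\beta\sqrt{N})$, recognize $\ss_N=|U_N|^2$ via the product structure $L^n_{isF}1(x,y)=L^n_{is\tf}1(x)\,\overline{L^n_{is\tf}1(y)}$, and split the $s$-integral into $[\epsilon,1]$, $[1,M_N]$ and $(M_N,\infty)$ treating them respectively by a uniform spectral gap (non-lattice property), by Lemma~4, and by Schwartz decay of $\hat\chi$ --- is indeed the intended route, and your identification of the balancing act (``sufficiently small $\eta$'' reconciles the Dolgopyat contraction with the widening of $\hat\chi(\epsilon_N\,\cdot)$) is correct.

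There is, however, a concrete error in the choice of $M_N$ that would, as written, make your tail estimate collapse.  You correctly state the requirement $\epsilon_N M_N\to\infty$ (this is exactly what the Schwartz bound $|\hat\chi(u)|\leq C_p(1+|u|)^{-p}$ needs on $\{|s|>M_N\}$), but then you take $M_N=e^{\eta' N}$ with $0<\eta'<\eta$.  With $\epsilon_N$ of size $e^{-\eta N}$ this gives $\epsilon_N M_N\sim e^{(\eta'-\eta)N}\to 0$, the opposite of what you need.  The inequality must go the other way: one takes $\eta'>\eta$ so that $\epsilon_N M_N\to\infty$ (and the hypothesis in the lemma should in turn be read as a \emph{lower} bound $\epsilon_N\geq c\,e^{-\eta N}$, i.e.\ $\epsilon_N$ does not decay too fast --- this is what matches the subexponential assumption in Theorem~2; a mere upper bound $\epsilon_N\leq Ce^{-\eta N}$ would allow $\epsilon_N$ to be as small as $e^{-N^2}$, against which no polynomially--many-iterates version of Lemma~4 can help).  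With the correction $\eta<\eta'$, the middle range then imposes $\eta'<(1-\eta')^{-1}\cdot\text{const}$ (coming from $e^{-2h_0 N}M_N^{2h_0+1}\to 0$ and $\rho^N M_N^{\alpha+1}\to 0$), and the window $\eta<\eta'<\eta_0$ is non-empty precisely when $\eta$ is small enough; that is the content of ``sufficiently small $\eta$.''  A second, smaller, gap: the inequality $|T_n(s)|\leq C\|L^n_{is\tf}1\|_\infty$ is not immediate.  The values $\tf_n(\overline c)$ at periodic points and $\tf_n(c\xi_0)$ entering $L^n_{is\tf}1(\xi_0)$ differ by a quantity that is only $O(1)$, not $o(1)$, so the corresponding phases differ by $O(|s|)$, which is not small on the range you care about.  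One needs the finer comparison of periodic-orbit sums with transfer operators (à la Ruelle/Parry--Pollicott, or Lemma~2 of Pollicott--Sharp), rather than a naive sup-norm domination; this is standard but should be cited or carried out, not asserted.
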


The rest of the proof of Theorem 2 follows closely arguments in \cite{kn:PoS2} and we omit the details.\\

By the same arguments we get

\begin{prop} Let $\epsilon_n $ be a sequence such that $\epsilon_n = {\mathcal O}(e^{-\eta n})$, where   $\eta > 0$
is a sufficiently small constant. Then for
$$\omega(n, I_n(z)) = \#\{(\gamma, \gamma'):\: |\gamma| = |\gamma'| = n,\: z + \epsilon_n a \leq T_{\gamma} - T_{\gamma'} \leq z + \epsilon_n b\}$$
we have
$$\lim_{n \to +\infty} \sup_{z \in \R} \Bigl| \frac{n^{5/2}}{\epsilon_n e^{2 h_0 n}} \omega(n, I_n(z)) - \frac{b-a}{(2\pi)^{1/2} \beta} e^{-z^2/2\beta^2 n}\Bigr| = 0.$$
\end{prop}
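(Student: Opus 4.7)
The plan is to mimic the proof of Theorem 2 with the simplification that restricting to $|\gamma|=|\gamma'|=n$ (rather than $\leq n$) removes the geometric-series factor $e^{2h_0}/(e^{h_0}-1)^2$ and leaves the leading constant $1/((2\pi)^{1/2}\beta)$. First, sandwich the indicator of $[a,b]$ between nonnegative $\chi_\pm \in C_0^\infty(\R)$ and pass to the smoothed count
$$\omega_n^{\chi}(z) = \sum_{|\gamma|=|\gamma'|=n} \chi\!\left(\frac{T_{\gamma} - T_{\gamma'} - z}{\epsilon_n}\right)\;.$$
Identifying length-$n$ orbits with periodic points of $\sigma^n$ in $\saa$ (each orbit accounting for $n$ fixed points) and absorbing the non-prime contributions of order $O(e^{h_0 n}) = o(\epsilon_n e^{2h_0 n}/n^{5/2})$ under the hypothesis $\epsilon_n = O(e^{-\eta n})$ yields
$$n^2\, \omega_n^{\chi}(z) = \sum_{\sigma^n x = x,\, \sigma^n y = y} \chi\!\left(\frac{\tf_n(x) - \tf_n(y) - z}{\epsilon_n}\right) + o\bigl(\epsilon_n e^{2h_0 n}\bigr)\;.$$

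Next, apply Fourier inversion $\chi(u) = (2\pi)^{-1}\int \hat\chi(t) e^{-itu}\, dt$ and rescale $t \mapsto t/(\beta\sqrt n)$ so that the expected Gaussian of width $\beta\sqrt n$ in $z$-space is visible on an $O(1)$ scale in Fourier. Writing $J_n(\tau) = \sum_{\sigma^n x = x,\, \sigma^n y = y} e^{i\tau(\tf_n(x) - \tf_n(y))}$, the expression becomes
$$\frac{\epsilon_n}{2\pi\beta\sqrt n}\int_\R e^{-itz/(\beta\sqrt n)}\, \hat\chi\!\left(\frac{\epsilon_n t}{\beta\sqrt n}\right) J_n\!\left(\frac{t}{\beta\sqrt n}\right) dt\;.$$
Split the integral at $|t|=\epsilon\beta\sqrt n$ for a small fixed $\epsilon>0$. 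On the inner range, Lemma 3 gives $e^{-2h_0 n}J_n(t/(\beta\sqrt n)) \to e^{-t^2/2}$ uniformly on compact sets of $t$; Gaussian integration (with $\hat\chi_\pm(0)\to b-a$ in the squeeze limit) then produces the asserted main term $\frac{b-a}{(2\pi)^{1/2}\beta}\, e^{-z^2/2\beta^2 n}\cdot \epsilon_n e^{2h_0 n}/n^{5/2}$. The outer-range contribution is, up to harmless constants, $A_2(n,z)$, which goes to $0$ uniformly in $z$ by Lemma 5.

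The main obstacle is the outer-range bound, where the Dolgopyat-type estimate (\ref{eq:3.14}) enters. Through Proposition 4, $L_{it\tf - h_0}$ is conjugate to $\ll_{itr - h_0}$, so (\ref{eq:3.14}) transfers to $\|L_{it\tf - h_0}^n\mathbf{1}\|_\infty \leq C\, e^{h_0 n}\rho^{p[\log|t|]/2}$, and the product formula $J_n(t) = L^n_{it\tf}\mathbf{1}\cdot L^n_{-it\tf}\mathbf{1}$ then yields Lemma 4, $\|L_{itF}^n\mathbf{1}\|_\infty \leq Ce^{2h_0 n}\min\{\rho^n|t|^\alpha,\, 1\}$ with $\alpha = |\log\rho|$. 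The hypothesis $\epsilon_n = O(e^{-\eta n})$ with $\eta$ small is exactly what is needed so that, when this bound is integrated against $\hat\chi(\epsilon_n t/(\beta\sqrt n))$ across the outer range $|t| \geq \epsilon\beta\sqrt n$ (extending out to $|t| \sim \beta\sqrt n/\epsilon_n$ before $\hat\chi$ cuts off), the polynomial growth $|t|^\alpha$ cannot defeat the exponential gain $\rho^n$, whence $A_2(n,z) \to 0$ uniformly in $z$. The remaining steps -- the squeeze between $\chi_\pm$ as their supports approach $[a,b]$ and the passage from $\omega_n^\chi$ back to $\omega(n,I_n(z))$ -- are routine and follow \cite{kn:PoS2}.
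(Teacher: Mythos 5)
Your proposal is correct and essentially reproduces the paper's intended argument; the paper itself disposes of Proposition~5 in one line (``By the same arguments we get''), deferring to the machinery of Theorem~2, and you have correctly identified that fixing $|\gamma|=|\gamma'|=n$ is what drops the geometric factor $e^{2h_0}/(e^{h_0}-1)^2$ while Lemmas~3, 4 and 5 do all the analytic work. One small slip: the non-prime contribution to the double sum over fixed points of $\sigma^n$ is $O(e^{3h_0 n/2})$ rather than $O(e^{h_0 n})$ (there are $\sim e^{h_0 n}$ fixed points in all and $O(e^{h_0 n/2})$ imprimitive ones), but this is still $o(\epsilon_n e^{2h_0 n}/n^{5/2})$ under the hypothesis on $\epsilon_n$, so the conclusion stands.
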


\section{Separation condition for the lengths of the primitive periodic rays}
\renewcommand{\theequation}{\arabic{section}.\arabic{equation}}
\setcounter{equation}{0}
\def\tg{T_{\gamma}}
\def\etg{e^{-\delta T_{\gamma}}}
\def\pp{{\mathcal P}}
In this section we discuss some open problems related to the distribution of the lengths of primitive periodic rays. Let ${\mathcal P}$ 
be the set of all primitive periodic rays in $\Omega$. Let $\Pi$ be the set of the lengths of rays $\gamma \in \pp$  and let $\Xi$ be the 
set of periods of all periodic rays in $\Omega.$ It is known that 
\begin{equation} \label{eq:5.1}
\# \{\gamma \in \pp:\: \tg \leq x\} = \frac{e^{hx}}{hx}(1 + o(1)),\: x \to +\infty,
\end{equation}
where 
$$h = \sup_{\mu \in {\mathcal M}_{\sigma}} \frac{h(m)}{\int f d\mu} > 0$$
is the {\it topological entropy} of the open billiard flow in $\Omega$. In particular, if we have Dolgopyat type estimates for the Ruelle operator 
$\ll_{-sr}$ for $s = \tau + it,\: |t| \geq 1$ and $h - \epsilon \leq \tau \leq h$, we can obtain a sharper estimate  than (\ref{eq:5.1})(see \cite{kn:PoS1}, \cite{kn:St2}, \cite{kn:St3}) 
$$\# \{\gamma \in \pp:\: \tg \leq x\} = li(e^{hx}) + {\mathcal O}(e^{c x}),\: x \to +\infty,$$
where $li(x) = \int_2^{\infty} dt/\log t$ and $0< c < h$ . It was proved in 
\cite{kn:PS1}, Chapter 3, that for generic obstacles $K$ the periods of the primitive rays are rationally independent, that is
$$\gamma \neq \delta \Rightarrow \frac{\tg}{T_{\delta}} \notin \Q,\: \forall \gamma , \forall \delta \in \pp.$$
Thus for generic obstacles we have  $T_{\gamma} \not= T_{\delta}$ for any distinct elements $\gamma$ and $ \delta$ of $ \pp$.

In what follows we assume that the latter property is satisfied. Let $T_{\gamma}, \: \gamma \in \pp$, be ordered as a sequence
$$T_1 < T_1<....<T_n<....$$
Introduce the intervals
$$J(\gamma, \delta) = [\tg - \etg, \tg + \etg],\: \gamma \in \pp,\: \delta > 0.$$
Obviously, the number of pairs $(\tg,\: T_{\delta})$ lying in $J(\gamma, \delta)$ decreases with $\delta$.
\begin{deff}
We say that the obstacle $K$ satisfies the separation condition $(S)$ if  there exists $\delta > h > 0$ such that
\begin{equation} \label{eq:5.2}
J(\gamma, \delta) \cap \Pi = \tg,\: \forall \gamma \in \pp.
\end{equation}
We say that $K$ satisfies the condition $(S_2)$ if there exists $\delta > h > 0$ such that
\begin{equation} \label{eq:5.3}
\#\{\gamma \in \pp: \: \tg \leq x,\:  J(\gamma, \delta) \cap \Pi = \tg\} \sim e^{\frac{h}{2} x},\: x \to +\infty.
\end{equation}
\end{deff}
The condition (S) was introduced in \cite{kn:P2}, however we are not aware of any geometric conditions on $K$ that would imply (S).
The same can be said about $(S_2).$ It important to notice that Proposition 5 cannot be applied with the sequence $\epsilon_n = e^{-\delta T_n},\: \delta > h.$ 
On the other hand, it was shown in \cite{kn:P2} that under the condition (S) there exists a global constant $A_0 > 0$ such that
$$\#\{d_{\gamma} \in \Xi:\: d_{\gamma} \in J(\gamma, \delta)\} \leq A_0 \tg,\: \forall \gamma \in \pp.$$

Thus, the distribution of all periods in $\Xi$ is such that we have no {\em clustering} of periods with big density  in $J(\gamma, \delta).$ 
It is clear that the condition $(S_2)$ is much weaker than $(S)$. \\

Let $\chi(t) \in C_0^{\infty}(-1, 1)$ be a positive function such that $\chi(t) = 1$ for $|t| \leq \epsilon_0 < 1/2$ with 
Fourier transform $\hat{\chi}(\xi) \geq 0,\: \forall \xi \in \R.$ Consider the function
$\varphi_j(t) = \chi(e^{\delta T_j} (t - T_j)).$
For the proof of the so called {\it Modified Lax-Phillips Conjecture (MLPC)} for the Dirichlet problem in $\R \times \Omega$ 
(see \cite{kn:I2}) it is necessary to find $\delta > h$ and a sequence $j \to + \infty$ so that
\begin{equation} \label{eq:5.4}
\Bigl|\sum_{\gamma} (-1)^{|\gamma|} \tg |\det(I - P_{\gamma})|^{-1/2} \varphi_j(d_{\gamma})\Bigr| \geq \eta_0 e^{-\eta T_j}
\end{equation}
with $\eta_0 > 0,\:\eta > 0$ independent on $j$. Here $P_{\gamma}$ is the {\it linear Poincar\'e map} related to $\gamma$ introduced in Sect. 1. 
It is easy to see that under the condition $(S_2)$ we can arrange (\ref{eq:5.4})  for a suitable sequence $j \to + \infty$, so (MLPC) follows from $(S_2)$. 

Indeed, assuming $(S_2)$, consider the number of iterated periodic rays $\mu \notin \pp$ with lengths
$d_{\mu} \in J(\gamma, \delta)$, provided $\gamma \in \pp,\:\tg \leq x.$ If $d_{\mu} = k T_{\nu},\: k \geq 2,$ then 
$2 \leq k \leq \frac{x + 1}{d_0},$ where $d_0 = 2\min_{i \not= j} {\rm dist} \: (K_i, K_j).$ Thus, using (\ref{eq:5.1}), 
the number of such non primitive periodic rays $\mu$ for $x \geq M_0$ large enough is not greater than
$$\frac{C}{h(x+1)} \sum_{k = 2}^{(x+1)/d_0} ke^{\frac{hx}{k}} \leq \frac{C_1 e^{\frac{h}{2}x}}{x+1}\Bigl( 1 + e^{-\frac{h}{6}x}\sum_{k=3}^{(x+1)/d_0}\frac{ k}{2}\Bigr) \leq \frac{1}{2}e^{\frac{h}{2}x}.$$
Consequently, taking into account (\ref{eq:5.3}), it is possible to find an infinite number of intervals
$J(\gamma_j, \delta)$ with $\gamma_j \in \pp$ such that $T_{\gamma_j} \to +\infty$ and
$$J(\gamma_j, \delta) \cap \Xi = T_{\gamma_j}\;.$$
For such rays the sum on the left hand side of (\ref{eq:5.4}) is reduced to the term
$T_{\gamma_j}|\det(I - P_{\gamma_j})|^{-1/2}$ and the inequality (\ref{eq:5.4}) follows from the
estimate
$$|\det(I - P_{\gamma_j})| \leq e^{c T_{\gamma_j}}$$
with some global constant $c > 0$ (see for instance, Appendix A.1 in \cite{kn:P1}). 
By the argument of Ikawa \cite{kn:I2} we conclude that (\ref{eq:5.4}) implies (MLPC).\\

The above analysis shows that $(S_2)$ can be replaced by a  weaker condition in order to satisfy (\ref{eq:5.4}).
Let $\pp_e$ (resp. $\pp_o$) be the set of primitive periodic rays with even (resp. odd) number of reflections and let
$\Pi_e$ (resp. $\Pi_o$) be the set of periods of $\gamma \in \pp_e$ (resp. $\gamma \in \pp_o$). We have (see \cite{kn:G}, \cite{kn:X1}, \cite{kn:X2}) the following analog of (\ref{eq:5.1})
$$\# \{ \gamma \in \pp_e: \: \tg \leq x\} \sim \frac{e^{hx}}{2hx},\: x \to +\infty,$$
$$\# \{ \gamma \in \pp_o: \: \tg \leq x\} \sim \frac{e^{hx}}{2hx},\: x \to +\infty.$$
\begin{deff} We say the obstacle $K$ satisfies the condition $(S_3)$ if there exists $\delta > h$ such that
\begin{equation}
\# \{\gamma \in \pp_e: \: \tg \leq x,\: J(\gamma, \delta) \cap \Pi_o = \emptyset\} \sim e^{\frac{h}{3}x},\: x \to + \infty.
\end{equation}
\end{deff}
Under the condition $(S_3)$ in every interval $J(\gamma, \delta),\: \gamma \in \pp_e,$ we may have an arbitrary {\it clustering} of  periods in $\Pi_e$ and this leads to a sum of terms with positive signs in (\ref{eq:5.4}). Assuming $(S_3)$ fulfilled, it is not hard to 
deduce  that there exists 
a sequence of $\gamma_j \in \pp_e,\: T_{\gamma_j} \to +\infty,$ such that
$$J(\gamma_j, \delta) \cap \Xi \subset (\Pi_e \cup 2\Pi),\: \forall j \in \N\;,$$
and the latter implies (MLPC). We leave the details to the reader.\\

It is an interesting open problem to investigate if the conditions $(S_2),\: (S_3)$ or some other similar condition are fulfilled.

\end{document}